\theoremstyle{plain}
\newtheorem{Thm}[equation]{Theorem}
\newtheorem*{Thm*}{Theorem}
\newtheorem{Prop}[equation]{Proposition}
\newtheorem{Lem}[equation]{Lemma}
\theoremstyle{definition}
\numberwithin{equation}{section}
\newcommand{\C}{\mathbb C}
\newcommand{\Z}{\mathbb{Z}}
\newcommand{\R}{\mathbb{R}}
\newcommand{\la}{\lambda}
\newcommand{\ep}{\varepsilon}
\newcommand{\Ind}{\operatorname{Ind}}
\newcommand{\sign}{\operatorname{sign}}
\newcommand{\Irr}{\operatorname{Irr}}
\renewcommand{\Re}{\operatorname{Re}}
\newcommand{\GL}{\operatorname{GL}}
\newcommand{\qand}{\quad\text{and}\quad}
\title[A Local converse theorem for Archimedean $\GL(n)$]{A Local converse theorem
  for Archimedean $\GL(n)$}
\author{Moshe Adrian}
\address{Department of Mathematics
Queens College, CUNY
65-30 Kissena Blvd., Queens, NY 11367-15971}
\email{moshe.adrian@qc.cuny.edu}
\author{Shuichiro Takeda}
\address{Mathematics Department, University of Missouri-Columbia, 202
  Math Sciences Building, Columbia, MO, 65211}
\email{takedas@missouri.edu}
\begin{document}

\maketitle

\begin{abstract}
We prove a local
converse theorem for $\GL_n$ over the archimedean local fields which
characterizes an infinitesimal equivalence class of irreducible
admissible representations of $\GL_n(\R)$ or $\GL_n(\C)$
in terms of twisted local gamma factors.
\end{abstract}


\section{Introduction}

Let $F$ be a local field of characteristic 0, and let $\Irr_n$ be the set
of (infinitesimal) equivalence classes of irreducible admissible
representations of $\GL_n(F)$. A so-called local converse theorem for
$\GL_n(F)$ characterizes the set $\Irr_n$ in terms of local factors
with some suitable twists. If $F$ is
non-archimedean, the first major result is the one by Henniart
(\cite{Henniart1}) in which he shows that if two {\it generic}
representations $\pi, \pi'\in\Irr_n$ satisfy
\[
\gamma(s, \pi\otimes\tau, \psi)=\gamma(s, \pi'\otimes\tau, \psi)
\]
for all $\tau\in\Irr_t$ for all $t=1,\dots,n-1$, where the
$\gamma$-factor is the one defined by Jacquet, Piatetski-Shapiro and
Shalika, then
$\pi=\pi'$. Later, Chen (\cite{Chen}) improved this result by requiring
$t$ be only up to $n-2$ with the extra assumption that $\pi$ and
$\pi'$ have the same central character. It had been conjectured by
Jacquet for some time that one only needs $t\leq[\frac{n}{2}]$. And recently this conjecture has been proven by Chai (\cite{Chai}), and Jacquet and Liu
(\cite{JL}) (see also \cite{ALSX, JNS}). Let us also mention that Nien (\cite{Nien}) has shown
an analogous result when $F$ is a finite field.

In this paper, we prove the archimedean analogue of the local converse theorem as follows.
\begin{Thm*}
Let $F=\C$ or $\R$. If  $\pi, \pi'\in\Irr_n$ are generic representations of
$\GL_n(F)$ that satisfy
\[
\gamma(s, \pi\times\chi, \psi)=\gamma(s, \pi'\times\chi, \psi)
\]
for all unitary characters $\chi$ on $F^\times$, then $\pi=\pi'$.
\end{Thm*}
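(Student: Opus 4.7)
The plan is to translate the statement into one about Langlands parameters --- i.e.\ semisimple finite-dimensional representations of the Weil group $W_F$ --- and then to show that the family of twisted $\gamma$-factors determines such a parameter up to isomorphism, via explicit analysis of the archimedean $L$- and $\varepsilon$-factors.

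By the archimedean Langlands classification for $\GL_n(F)$, together with the fact that a generic irreducible admissible representation is fully induced from an essentially tempered representation of a standard Levi subgroup (so the Langlands quotient is the full induced representation), both $\pi$ and $\pi'$ admit semisimple Langlands parameters $\phi=\bigoplus_i\sigma_i$ and $\phi'=\bigoplus_j\sigma_j'$ in which each irreducible summand is a character of $F^\times$ when $F=\C$, or (when $F=\R$) either a character of $\R^\times$ or a two-dimensional representation of $W_\R$ induced from a non-Galois-invariant character of $W_\C$. Under local Langlands, the Jacquet--Piatetski-Shapiro--Shalika/Shahidi archimedean $\gamma$-factor agrees with the Artin--Weil $\gamma$-factor of the corresponding tensor product of parameters, so the hypothesis becomes
\[
\prod_i\gamma(s,\sigma_i\otimes\chi,\psi)=\prod_j\gamma(s,\sigma_j'\otimes\chi,\psi)
\]
for every unitary character $\chi$ of $F^\times$, and the problem reduces to recovering the multiset $\{\sigma_i\}$ from this collection of meromorphic identities.

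I would then proceed by induction on $n$, extracting one irreducible summand at a time. In both cases the local $L$-factor of an irreducible parameter is an explicit shifted $\Gamma$-function whose rightmost pole lies at an explicit point determined by the parameter, and the $\varepsilon$-factor is elementary. For a chosen summand $\sigma_i$ I would pick a unitary twist $\chi$ so that $L(s,\sigma_i\otimes\chi)$ acquires a rightmost pole whose location and order are not matched by any other summand of $\phi$ or of $\phi'$. The pole/zero pattern of the $\gamma$-factor then forces the existence of a matching summand of $\phi'$ with the same parameter; cancelling that pair and invoking the inductive hypothesis on the remaining parameters completes the step. Residual ambiguities between summands having identical $L$-factor (e.g.\ a character and its sign twist when $F=\R$) are resolved by comparing the explicit $\varepsilon$-factors, which depend on the parameter in a fine enough way.

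The main obstacle is the separation step: when many $\sigma_i$ and $\sigma_j'$ contribute nearby poles one must produce sufficiently flexible families of test characters $\chi$ to distinguish them, and in the real case one must additionally separate one-dimensional summands from two-dimensional ones and distinguish two-dimensional summands induced from different Galois-conjugacy classes of characters of $W_\C$. The bulk of the technical work therefore lies in the combinatorial bookkeeping of $\Gamma$-function cancellations and in constructing the right family of unitary twists to isolate each irreducible summand.
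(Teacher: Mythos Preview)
Your outline follows the same overall strategy as the paper: pass to Langlands parameters via the LLC and recover the multiset of irreducible constituents from the pole/zero pattern of the twisted $\gamma$-factors. The paper organizes this not by induction on $n$ but by first partitioning the constituents into equivalence classes under $t_i-t_j\in\Z$ (so that poles from different classes never interfere) and then working within a single class; your inductive peeling-off of summands amounts to essentially the same bookkeeping.

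Two points in your outline are, however, underspecified in ways that matter. First, genericity does more than guarantee that $\pi$ is fully induced: the paper extracts from it explicit inequalities (e.g.\ $0\le t_j-t_i\le N_j-N_i$ in the complex case, and the conditions (3-a)--(3-c) in the real case) which are precisely what make the ``rightmost pole'' arguments go through. Without invoking these inequalities you cannot rule out that a denominator $\Gamma(1-s-u_i+N_i)$ cancels the pole you are trying to detect, nor can you conclude $N_i=N_i'$ once the $t_i$'s agree. Your outline says nothing about how genericity enters the pole comparison, and this is exactly where the argument would otherwise break.

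Second, your proposal to resolve the $\ep_i$'s by ``comparing the explicit $\varepsilon$-factors'' does not quite work as stated: since $L(\lambda_{\ep,t})$ is independent of $\ep$ and $\varepsilon(\lambda_{\ep,t},\psi_\R)=(-i)^\ep$, the product $\varepsilon$-factor only records $\sum_i\ep_i\bmod 4$, which is insufficient when $p>1$. The paper instead twists by the sign character $\lambda_{1,s}$, which shifts the pole of the numerator $\Gamma$-function from $\frac{s+t_i}{2}$ to $\frac{s+t_i-2\ep_i}{2}$ and thereby exposes each $\ep_i$ through pole locations. You could repair your approach by also tracking the zeros of $\gamma$ (coming from $L(1-s,\varphi^\vee)$, whose arguments do depend on $\ep_i$), but the sign-twist is cleaner.
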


Here, the gamma factors are defined on the ``Galois side" via the local Langlands correspondence (LLC); namely they are the gamma factors of Artin type. The basic idea of our proof is that we pass to the Galois side via the LLC so that the gamma factors, which are essentially products of gamma functions, can be explicitly computed in terms of the data for the corresponding representations of the Weil group.  Then, we will compare poles of the gamma functions. In this sense, what we actually prove is the following assertion: given two local Langlands parameters $\varphi, \varphi':W_F\to\GL_n(\C)$ of generic type, if $\gamma(s, \varphi\otimes\chi, \psi)=\gamma(s, \varphi'\otimes\chi, \psi)$ for all $1$-dimensional characters $\chi$, then $\varphi=\varphi'$. 

We note that it can be shown that the LLC for archimedean $\GL(n)$ is uniquely characterized by the local $L$-factors. This uniqueness result was originally announced by Henniart in \cite[1.10, p.592]{Henniart2}, although his proof has never appeared. Yet, in \cite{Adrian-Takeda} we have proven a refined version of the theorem announced by Henniart. This result will appear elsewhere.

It should be also noted that these gamma factors of Artin type are known to coincide with the local coefficients of Shahidi (\cite{Shahidi}). Moreover, in \cite{Jacquet}, Jacquet has shown that these gamma factors appear as constants of the functional equations satisfied by Rankin-Selberg integrals. The authors do not know if it is possible to prove the local converse theorem as above by using purely representation theoretic methods using this theory of Jacquet without passing to the Galois side, and this is certainly an interesting question to be answered.

\quad

\begin{center}{\bf Notations}\end{center}

Throughout, $F$ is either $\R$ or $\C$. We let $\Irr_n$ be the
set of infinitesimal equivalence classes of irreducible admissible
representations of $\GL_n(F)$. For $z\in F$, we let
$|z|=\sqrt{z\bar{z}}$, so that if $F=\R$, it is the absolute value of
$z$, and if $F=\C$, it is the usual modulus of $z$. We also let
$\|z\|=z\bar{z}=|z|^2$. By a character, we always mean a
quasi-character, and $\Irr_1$ is
the set of characters of $F^\times$. We let $\psi_F$ be the standard
choice of additive character on $F$; namely if $F=\R$, then
$\psi_\R(r)=e^{2\pi i r}$, and if $F=\C$, then
$\psi_\C(z)=\psi_\R\circ\operatorname{Tr}_{\C/\R}(z)=e^{2\pi
  i(z+\bar{z})}$. We let $\Gamma(s)$ be the gamma function.
Recall that $\Gamma(s)$ has no zeroes, and has infinitely many
poles, which are precisely at $s=0, -1,
-2, \dots$, all of which are simple.

Finally, if $w, z \in \mathbb{C}$, then we write
$w \preceq z$ if $z-w \in \mathbb{Z}^{\geq 0}$ . This is a partial order on $\C$. Also $w\prec z$ means $w\preceq z$ and $w\neq z$. For fixed $z, w\in\C$, the gamma functions $\Gamma(s+z)$ and $\Gamma(s+w)$ have a common pole if and only if $z$ and $w$ are comparable under $\preceq$, namely $z-w\in\Z$. We use this fact repeatedly throughout the paper.

\quad

\begin{center}{\bf Acknowledgements}\end{center}

We thank Herv\'{e} Jacquet for suggesting this problem. We also thank the anonymous referee for his/her valuable comments.



\section{Complex case}


In this section, we consider the complex case, so we set $F=\C$.

\subsection{Weil group and its representations}
We let $W_F$ be the Weil group of $F$, namely
\[
W_\C=\C^\times.
\]
Each (not necessarily unitary) character of $\C^\times$, which we also view as a 1-dimensional representation of $W_\C$, is of the form
\[
\chi_{-N, t}(z):=z^{-N}\|z\|^t
\]
for $z\in\C^\times$, where $N\in\Z$ and $t\in\C$.  Let us note that if we write $z=re^{i\theta}$ with $r,
\theta\in\R$ as usual, we have
\[
\chi_{-N,t}(z)=r^{2t-N}e^{-iN\theta}.
\]
But when dealing with the local factors, it seems to be more
convenient to denote each character as $z^{-N}\|z\|^t$ instead of
using $re^{i\theta}$, and hence we choose this convention.
Let us note that
\[
\overline{\chi_{-N,t}}=\chi_{N, t-N},
\]
where $\overline{\chi_{-N,t}}(z):=\overline{\chi_{-N,t}(z)}=\chi_{-N,t}(\bar{z})$ as usual.

Since $W_{\C}$ is abelian, $\chi_{-N, t}$ is the only irreducible semisimple representation of $W_{\C}$, and hence each $n$-dimensional semisimple representation
\[
\varphi:W_\C\rightarrow\GL_n(\C)
\]
is of the form
\begin{equation}\label{E:general_parameter_C}
\varphi=\chi_{-N_1,t_1}\oplus\cdots\oplus\chi_{-N_n, t_n}.
\end{equation}
Note that the contragredient $\varphi^\vee$ is
\[
\varphi^\vee=\chi_{N_1,-t_1}\oplus\cdots\oplus\chi_{N_n, -t_n},
\]
because $\chi_{-N, t}^\vee=\chi_{-N, t}^{-1}=\chi_{N, -t}$.

\subsection{Local factors}
Recall that the $L$-, $\epsilon$- and $\gamma$-factors of the character $\chi_{-N,t}$ are defined as follows:
\begin{align}
\label{E:L-factor-C} L(\chi_{-N, t})&=2(2\pi)^{-(t-\frac{N}{2}+\frac{|N|}{2})}\Gamma(t-\frac{N}{2}+\frac{|N|}{2});\\
\label{E:ep-factor-C}\epsilon(\chi_{-N,t},\psi_\C)&=i^{|N|};\\
\notag\gamma(\chi_{-N,t},\psi_\C)&=\epsilon(\chi_{-N,t},\psi_\C)\frac{L(\chi_{-N, t}^\vee\|\cdot\|)}{L(\chi_{-N, t})}\\
\label{E:gamma-factor-C}&=i^{|N|}(2\pi)^{-1+2t-N}\frac{\Gamma(1-t+\frac{N}{2}+\frac{|N|}{2})}{\Gamma(t-\frac{N}{2}+\frac{|N|}{2})}.
\end{align}
If $\varphi:W_\C\rightarrow\GL_n(\C)$ is an
$n$-dimensional representation as in \eqref{E:general_parameter_C}, we
define the local factors multiplicatively as follows:
\begin{align*}
L(\varphi)&=L(\chi_{-N_1,t_1})\cdots L(\chi_{-N_n, t_n});\\
\epsilon(\varphi,\psi_\C)&=\epsilon(\chi_{-N_1,t_1}, \psi_\C)\cdots\epsilon(\chi_{-N_n, t_n}, \psi_\C);\\
\gamma(\varphi,\psi_\C)&=\gamma(\chi_{-N_1,t_1}, \psi_\C)\cdots\gamma(\chi_{-N_n, t_n}, \psi_\C).
\end{align*}
Note that we have
\[
\gamma(\varphi, \psi_{\C})=\epsilon(\varphi, \psi_{\C})\frac{L(\varphi^\vee\|\cdot\|)}{L(\varphi)}.
\]

\subsection{$\GL(1)$-twist}
Let $\chi_{-M,s}$ be another character on $\C^\times$, and let $\varphi$ be an $n$-dimensional representation of
$W_\C$ as in \eqref{E:general_parameter_C}. Then the twist $\varphi\otimes\chi_{-M,s}$ by  $\chi_{-M,s}$ is given by
\begin{equation}\label{E:twisted_general_parameter_C}
\varphi\otimes\chi_{-M,s}=\chi_{-(N_1+M), t_1+s}\oplus\cdots\oplus\chi_{-(N_n+M), t_n+s}.
\end{equation}
We set
\begin{align*}
L(s, \varphi)&=L(\varphi\otimes\chi_{0, s});\\
\epsilon(s, \varphi,\psi_\C)&=\epsilon(\varphi\otimes\chi_{0, s}, \psi_\C);\\
\gamma(s, \varphi,\psi_\C)&=\gamma(\varphi\otimes\chi_{0, s}, \psi_\C).
\end{align*}
We then have
\[
\gamma(s, \varphi,\psi_{\C})=\epsilon(\varphi, \psi_{\C})\frac{L(1-s, \varphi^\vee)}{L(s, \varphi)}.
\]

\subsection{Local Langlands correspondence for $\GL_n(\C)$}
By the archimedean local Langlands correspondence, originally established by Langlands (\cite{Langlands}), there is a one-to-one
correspondence between the set $\Irr_n$ of (infinitesimal equivalence classes) of irreducible admissible representations of $\GL_n(\C)$ and the set $\Phi_n$ of (conjugacy classes of) all continuous semisimple $n$-dimensional representations of $W_{\C}$. This correspondence can be fairly explicitly described as follows. For each
\[
\varphi=\chi_{-N_1,t_1}\oplus\cdots\oplus\chi_{-N_n, t_n}\in\Phi_n,
\]
consider the (normalized) induced representation
\[
I(\varphi):=\Ind_{B(\C)}^{\GL_n(\C)}\chi_{-N_1,t_1}\otimes\cdots\otimes\chi_{-N_n, t_n},
\]
where $B(\C)$ is the Borel subgroup of $\GL_n(\C)$ and the character $\chi_{-N_1,t_1}\otimes\cdots\otimes\chi_{-N_n, t_n}$ is viewed as a character on $B(\C)$ as usual. Let us reorder the constituents of $\varphi$ in the Langlands situation, which means
\[
\Re(t_1)\geq \cdots\geq\Re(t_n).
\]
By the Langlands quotient theorem, $I(\varphi)$ has a unique irreducible quotient, which we denote by $\pi_\varphi$. Then the local Langlands correspondence is obtained by the map
\[
\Phi_n\longrightarrow\Irr_n,\quad \varphi\mapsto\pi_{\varphi}.
\]

\subsubsection{Genericity conditions}
It is well-known that the (full) induced representation $I(\varphi)$ is always generic. (See, for example, \cite[Theorem 15.4.1, p.381]{Wallach}.) The following proposition characterizes when the Langlands quotient $\pi_\varphi$ is generic.
\begin{Prop}\label{P:genericity_condition_complex}
Let
\[
\varphi=\chi_{-N_1,t_1}\oplus\cdots\oplus\chi_{-N_n, t_n}
\]
be such that
\[
N_1\leq \cdots\leq N_n.
\]
Then the following are all equivalent.
\begin{enumerate}[(1)]
\item The representation $\pi_\varphi$ that corresponds to $\varphi$ under the local Langlands correspondence is generic.
\item $\pi_\varphi=I(\varphi)$, namely $I(\varphi)$ is irreducible.
\item For all $i\leq j$, whenever $t_j-t_i\in\Z$, we have
\[
0\leq t_j-t_i\leq N_j-N_i.
\]
In particular, if $t_j-t_i\in\Z$ then $t_i\preceq t_j$, where we recall from the notation section that $t_i\preceq t_j$ means $t_j-t_i\in\Z^{\geq 0}$. (Note that if $t_j-t_i\notin \Z$ then there is no condition.)
\item The Rankin-Selberg $L$-factor
\[
L(s, \varphi\otimes\varphi^\vee):=L(\varphi\otimes\varphi^\vee\otimes\chi_{0, s})
\]
is holomorphic at $s=1$.
\end{enumerate}
\end{Prop}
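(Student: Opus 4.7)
The plan is to prove the cyclic equivalence $(3)\Leftrightarrow(4)$, $(1)\Leftrightarrow(2)$, and $(2)\Leftrightarrow(3)$. The two computational cores are the explicit shape \eqref{E:L-factor-C} of the $L$-factor (for $(3)\Leftrightarrow(4)$) and the classical reducibility criterion for principal series of $\GL_n(\C)$ (for $(2)\Leftrightarrow(3)$). The remaining equivalence $(1)\Leftrightarrow(2)$ follows from general facts about genericity for $\GL_n$ already cited in the excerpt.

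For $(3)\Leftrightarrow(4)$, I would expand
$$\varphi\otimes\varphi^\vee=\bigoplus_{i,j}\chi_{-(N_i-N_j),\,t_i-t_j}$$
and use \eqref{E:L-factor-C} to write $L(s,\varphi\otimes\varphi^\vee)$ as a product of factors $2(2\pi)^{-(s+a_{ij})}\Gamma(s+a_{ij})$ with $a_{ij}=(t_i-t_j)-(N_i-N_j)/2+|N_i-N_j|/2$. For $i\leq j$ the two relevant quantities simplify to $a_{ij}=(t_i-t_j)+(N_j-N_i)$ and $a_{ji}=t_j-t_i$, and $\Gamma(s+a)$ is holomorphic at $s=1$ iff $a\notin\Z$ or $a\geq 0$. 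Hence holomorphy of the full product at $s=1$ amounts to demanding, for every pair $i\leq j$ with $t_j-t_i\in\Z$, both $t_j-t_i\geq 0$ (from $a_{ji}$) and $t_j-t_i\leq N_j-N_i$ (from $a_{ij}$); this is exactly condition $(3)$.

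For $(1)\Leftrightarrow(2)$, the implication $(2)\Rightarrow(1)$ is immediate from the cited genericity of $I(\varphi)$. For the converse, if $\pi_\varphi$ is generic then by uniqueness of Whittaker models on $\GL_n$ it is the unique generic irreducible subquotient of $I(\varphi)$; the standard module theorem for $\GL_n$ (a known special case of the Casselman--Shahidi conjecture) then forces $I(\varphi)=\pi_\varphi$, giving $(2)$.

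For $(2)\Leftrightarrow(3)$, irreducibility of $I(\varphi)$ reduces to pairwise irreducibility of the $\GL_2(\C)$-inductions from each pair $(\chi_{-N_i,t_i},\chi_{-N_j,t_j})$, and the reducibility locus of such a $\GL_2(\C)$ principal series is precisely the negation of the inequalities in $(3)$. I expect this step to be the main obstacle, since translating the $\GL_2(\C)$ reducibility criterion into the specific bookkeeping on the data $(N_i,t_i)$ requires some care. A cleaner alternative is to prove $(4)\Rightarrow(2)$ directly by analyzing the standard intertwining operator from $I(\varphi)$ to its Weyl conjugate: its poles and zeros on the real axis are controlled by the same product of gamma functions defining $L(s,\varphi\otimes\varphi^\vee)$, so that holomorphy at $s=1$ forces the intertwiner to be an isomorphism and hence $I(\varphi)$ to be irreducible.
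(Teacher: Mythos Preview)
Your proposal is correct and matches the paper's approach: $(3)\Leftrightarrow(4)$ via the explicit gamma-function factorization of $L(s,\varphi\otimes\varphi^\vee)$, $(1)\Leftrightarrow(2)$ cited as well-known, and $(2)\Leftrightarrow(3)$ via the pairwise $\GL_2(\C)$ reducibility criterion (the paper writes $\chi_{-N_i,t_i}\chi_{-N_j,t_j}^{-1}(z)=z^{N_j-N_i+t_i-t_j}\bar z^{\,t_i-t_j}$ and uses that reducibility occurs exactly when both exponents lie in $\Z^{>0}$ or both in $\Z^{<0}$, citing Jacquet--Langlands or Speh--Vogan). The bookkeeping you flag as the main obstacle turns out to be only a few lines.
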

\begin{proof}
The equivalence of (1) and (2) is well-known. The equivalence of (2) and (3) is a special case of \cite{Speh-Vogan}, though, presumably, the case of $\GL_n(\C)$ had been known much before. Since the authors were not able to find an explicit reference for $\GL_n(\C)$, we reproduce essential parts of the proof.

First consider the principal series
\[
\Ind_{B(\C)}^{\GL_n(\C)}\chi_1\otimes\cdots\otimes\chi_n,
\]
where $\chi_i:\C^\times\to\C^\times$ is a character. This is reducible if and only if for some $i\neq j$ the character $\chi_i\chi_j^{-1}$ is of the form
\[
\chi_i\chi_j^{-1}(z)=z^p\overline{z}^q\qquad (p-q\in\Z),
\]
where either both $p$ and $q$ are in $\Z^{>0}$ or both $p$ and $q$ are in $\Z^{<0}$. (One can prove this by reducing to the $\GL_2(\C)$ situation by induction in stages and applying \cite[Theorem 6.2]{Jacquet-Langlands} or one may apply the general result of \cite[Theorem 1.1]{Speh-Vogan} to $\GL_n(\C)$.)

Now for each $i<j$ we have
\[
(\chi_{-N_i, t_i})(\chi_{-N_j, t_j})^{-1}(z)=z^{-N_i+N_j}\|z\|^{t_i-t_j}=z^{N_j-N_i+t_i-t_j}\overline{z}^{t_i-t_j}.
\]
Noting $N_i\leq N_j$, we know that $I(\varphi)$ is reducible if and only if $t_i-t_j\in\Z^{>0}$ or otherwise both $N_j-N_i+t_i-t_j\in\Z^{<0}$ and $t_i-t_j\in\Z^{<0}$. Hence $I(\varphi)$ is irreducible if and only if whenever $t_i-t_j\in\Z\smallsetminus\{0\}$ we have $t_i-t_j\notin\Z^{>0}$ and $N_j-N_i+t_i-t_j\notin\Z^{<0}$. One can then see that these conditions are precisely (3).

We show the equivalence of (3) and (4). Since
\[
\varphi\otimes\varphi^\vee\otimes\chi_{0, s}=\sum_{i, j}\chi_{-N_i, t_i}\otimes\chi_{N_j,-t_j}\otimes\chi_{0, s}=\sum_{i, j}\chi_{-(N_i-N_j),\, s+t_i-t_j},
\]
we have
\begin{align*}
L(s, \varphi\otimes\varphi^\vee)&=\prod_{i, j}L(\chi_{-(N_i-N_j),\, s+t_i-t_j})\\
&=F(s)\prod_i\Gamma(s)\prod_{i<j}\Gamma(s+t_i-t_j-N_i+N_j)\Gamma(s+t_j-t_i),
\end{align*}
where $F(s)$ is a holomorphic function without zeros. (Here to compute the $L$-factors we used that $N_i$'s are in the increasing order.) Hence $L(s, \varphi\otimes\varphi^\vee)$ is holomorphic at $s=1$ if and only if
\[
t_i-t_j-N_i+N_j\notin\Z^{<0}\quad\text{and}\quad t_j-t_i\notin\Z^{<0}.
\]
But this condition is equivalent to
\[
0\leq t_j-t_i\leq N_j-N_i
\]
whenever $t_j-t_i\in\Z$ with $i\leq j$.
\end{proof}

\subsection{Local Converse Theorem for $\GL_n(\C)$}

For two characters $\chi_{N, t}$ and $\chi_{M, s}$, we define
\[
\chi_{N, t}\sim\chi_{M, s}\quad \text{if $t-s\in\Z$}.
\]
This is certainly an equivalence relation. Then, given a Langlands parameter $\varphi$ of $\GL_n(\C)$, by grouping the constituents by this equivalence relation we can write
\[
\varphi=\varphi_1\oplus\cdots\oplus\varphi_k,
\]
where all the constituents of $\varphi_i$ are equivalent under $\sim$ and the constituents of different $\varphi_i$ and $\varphi_j$ are inequivalent under $\sim$. Then we know that in the $\gamma$-factor
\[
\gamma(s, \varphi, \psi_{\C})=\gamma(s, \varphi_1, \psi_{\C})\cdots\gamma(s, \varphi_n, \psi_{\C})
\]
the zeros and the poles coming from $\gamma(s, \varphi_i, \psi_{\C})$ do not interfere with those coming from $\gamma(s, \varphi_j, \psi_{\C})$ for $j\neq i$.

Let us first prove the following.
\begin{Prop}
Let
\[
\varphi=\chi_{-N_1, t_1}\oplus\cdots\oplus\chi_{-N_n, t_n}\quad\text{and}\quad\varphi'=\chi_{-N'_1, t'_1}\oplus\cdots\oplus\chi_{-N'_{n'}, t'_{n'}}
\]
be generic parameters of $\GL_n(\C)$ and $\GL_{n'}(\C)$, respectively, such that all the constituents $\chi_{-N_i, t_i}$'s and $\chi_{-N'_j, t'_j}$'s are equivalent under $\sim$, namely $t_i-t_j'\in\Z$ for all $i, j$. Assume
\begin{equation}\label{E:idenity_gamma_factor_complex}
F_{\chi}(s)\gamma(s, \varphi\otimes\chi, \psi_{\C})=\gamma(s, \varphi'\otimes\chi, \psi_{\C})
\end{equation}
for all characters $\chi$, where $F_{\chi}(s)$ is a meromorphic function (depending on $\chi$) whose poles and zeros do not interfere with those from the gamma factors. Then $\varphi=\varphi'$ (and hence $n=n'$).
\end{Prop}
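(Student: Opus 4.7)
The plan is to extract information about $\varphi$ from the pole and zero pattern of $\gamma(s, \varphi\otimes\chi_{-M, 0}, \psi_{\C})$ for varying integer $M$, and then to invoke genericity to recover $\varphi$ from that pattern. First, since all $t_i, t'_j$ lie in a common $\Z$-coset, I write $t_i = \tau + a_i$ and $t'_j = \tau + a'_j$ with $a_i, a'_j \in \Z$, and order the constituents so that $N_1 \leq \cdots \leq N_n$ and $N'_1 \leq \cdots \leq N'_{n'}$. The genericity criterion in Proposition~\ref{P:genericity_condition_complex}(3) then gives $0 \leq a_j - a_i \leq N_j - N_i$ for $i \leq j$ (and similarly for the primed indices), so both the sequence $(a_i)$ and the sequence $(N_i - a_i)$ are nondecreasing, and likewise for their primed analogues.

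From \eqref{E:gamma-factor-C}, the factor $\gamma(s, \chi_{-(N_i+M), t_i}, \psi_\C)$ has simple poles at $s = 1 - t_i + \max(N_i+M, 0) + k$ and simple zeros at $s = -t_i + \min(N_i+M, 0) - k$ for $k \in \Z^{\geq 0}$. All of these $s$ lie in the single translate $-\tau + 1 + \Z$, so I parametrize by $s = -\tau + 1 + m$ with $m \in \Z$ and let $D(m)$ be the total order (poles minus zeros, with multiplicity) of $\gamma(s, \varphi \otimes \chi_{-M, 0}, \psi_\C)$ at that $s$. A direct count yields
\[
D(m) = \#\{i : m \geq X_i\} - \#\{i : m \leq Y_i\},
\]
with $X_i := \max(N_i+M, 0) - a_i$ and $Y_i := \min(N_i+M, 0) - a_i - 1$. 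This $D:\Z\to\Z$ is nondecreasing, tends to $\pm n$ as $m \to \pm\infty$, and its jump at $m$ equals the multiplicity of $m$ in $\{X_i\} \cup \{Y_i + 1\}$. The non-interference hypothesis on $F_\chi$ in \eqref{E:idenity_gamma_factor_complex} forces $D(m) = D'(m)$ for every $m$, and hence the jump multisets coincide.

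The key algebraic identity $\max(N+M, 0) + \min(N+M, 0) = N+M$ gives $\{X_i, Y_i+1\} = \{N_i + M - a_i,\, -a_i\}$ regardless of the sign of $N_i + M$, so the multiset equality reduces to
\[
\bigcup_i \{N_i + M - a_i,\; -a_i\} = \bigcup_j \{N'_j + M - a'_j,\; -a'_j\}.
\]
Taking $M$ large enough separates the ``large'' elements $N_i + M - a_i$ from the bounded ones $-a_i$ (and similarly on the primed side), yielding $n = n'$ together with the multiset identities $\{a_i\} = \{a'_i\}$ and $\{N_i - a_i\} = \{N'_i - a'_i\}$. By the nondecreasing property from the first paragraph, each of these multisets reconstructs its ordered sequence uniquely, so $a_i = a'_i$ and $N_i - a_i = N'_i - a'_i$ for all $i$; hence $N_i = N'_i$ and $\varphi = \varphi'$.

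The main subtlety is that the pole/zero data of a single twisted gamma factor only yields the two multisets $\{a_i\}$ and $\{N_i - a_i\}$, which in general are insufficient to determine the pairing between the $N_i$'s and the $t_i$'s. Genericity is precisely what resolves this ambiguity, by simultaneously forcing both $(a_i)$ and $(N_i - a_i)$ to be nondecreasing in the same $N$-ordering, so that each multiset is the sorted form of its sequence.
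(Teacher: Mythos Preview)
Your proof is correct and arrives at the same two multiset identities the paper uses, but the route is genuinely different in its mechanics. The paper works inductively: it first pins down $n=n'$ by a single high-order pole, then takes $M$ large enough that all denominators are harmless on the region of interest and peels off the $t_i$'s one at a time by chasing the leftmost pole of $\prod_i\Gamma(s+t_i)$; after cancelling numerators it repeats the pole-chasing on $\prod_i\Gamma(1-s-t_i+N_i+M)$ to obtain the multiset $\{t_i-N_i\}$, and only then invokes genericity to match up the two lists. You instead package the entire pole/zero profile of the twisted gamma factor into the step function $D(m)$, read off the jump multiset in one stroke, and use the identity $\max(N+M,0)+\min(N+M,0)=N+M$ to rewrite it as $\bigcup_i\{N_i+M-a_i,\,-a_i\}$ without any case analysis on the sign of $N_i+M$.

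What each approach buys: the paper's inductive argument is more elementary in that it never needs to think about cancellation between poles and zeros of different constituents---by taking $M$ large it simply pushes the zeros away. Your approach handles all $M$ uniformly and is more conceptual, making transparent exactly which invariant of $\varphi$ the gamma factor sees (the unordered pair of multisets $\{a_i\}$, $\{N_i-a_i\}$). Your final step is also cleaner: rather than the paper's somewhat ad hoc inequality-chasing in its last paragraph, you observe directly that genericity forces both $(a_i)$ and $(N_i-a_i)$ to be nondecreasing in the same $N$-ordering, so each multiset determines its sequence by sorting, and the pairing is forced. This is the same use of genericity as in the paper, stated more crisply.
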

\begin{proof}
Since all the constituents $\chi_{-N_i, t_i}$'s and $\chi_{-N'_j, t'_j}$'s are equivalent under $\sim$, there exists $s_0$ with $\Re(s_0)$ large enough such that all of the $\gamma(s, \chi_{-N_i, t_i}, \psi_{\C})$ and $\gamma(s, \chi_{-N'_j, t'_j}, \psi_{\C})$ have a simple pole at $s=s_0$, so that, at $s=s_0$, $\gamma(s, \varphi\otimes\chi, \psi_{\C})$ has a pole of order $n$ and $\gamma(s, \varphi'\otimes\chi, \psi_{\C})$ has a pole of order $n'$. Hence we have $n=n'$.

Without loss of generality, we may assume
\[
N_1\leq\cdots\leq N_n\quad\text{and}\quad N'_1\leq\cdots\leq N'_n.
\]
Since $\varphi$ is generic and $t_j-t_i\in\Z$, by Proposition \ref{P:genericity_condition_complex} (3) we have
\begin{equation}\label{E:genericity_condition_complex_proof}
0\leq t_j-t_i\leq N_j-N_i
\end{equation}
for $i\leq j$. In particular, $t_1\preceq\cdots\preceq t_n$. And similarly for $t_i'$'s and $N_i'$'s.

Let $\chi=\chi_{-M, 0}$ be such that $M+N_i>0$ and $M+N'_i>0$ for all $i$. Then (the reciprocal of) the identity \eqref{E:idenity_gamma_factor_complex} is equivalent to
\begin{equation}\label{E:gamma_fanction_identity_complex}
F(s)\prod_{i=1}^n\frac{\Gamma(s+t_i)}{\Gamma(1-s-t_i+N_i+M)}=\prod_{i=1}^n\frac{\Gamma(s+t'_i)}{\Gamma(1-s-t'_i+N'_i+M)},
\end{equation}
where $F(s)$ is a meromorphic function whose poles and zeros do not interfere.

Set $M$ to be large enough so that all of the gamma functions in the denominators in \eqref{E:gamma_fanction_identity_complex} are holomorphic at $s=-t_1, \dots, -t_n$. Then on the left-hand side we have a pole at $s=-t_1$ coming from $\Gamma(s+t_1)$. Hence on the right-hand side we must have a pole at $s=-t_1$ from some $\Gamma(s+t'_i)$. If $t'_1$ is such that $t_1'\succ t_1$ (strict inequality), then since $t_i'$'s are in the increasing order (with respect to $\preceq$) we never have a pole at $s=-t_1$ for any of the $\Gamma(s+t'_i)$'s. Hence $t_1'\preceq t_1$. By switching the roles of $t_1$ and $t_1'$ we have $t_1'\succeq t_1$. Hence we have $t_1=t_1'$. Thus, $\Gamma(s+t_1)$ and $\Gamma(s+t_1')$ can be removed from \eqref{E:gamma_fanction_identity_complex}. Arguing inductively we have
\[
t_i=t_i'
\]
for all $i=1,\dots, n$.

Thus we can reduce \eqref{E:gamma_fanction_identity_complex} to
\begin{equation}\label{E:gamma_fanction_identity_complex2}
F(s)\prod_{i=1}^n\Gamma(1-s-t'_i+N'_i+M)=\prod_{i=1}^n\Gamma(1-s-t_i+N_i+M),
\end{equation}
where $M$ is a fixed integer. Let $k, \ell\in\{1,\dots,n\}$ be such that
\[
t_k-N_k\succeq t_i-N_i\quad\text{and}\quad t_\ell'-N_\ell'\succeq t_i'-N_i'
\]
for all $i$. Then at $s=-t_k+N_k+M$ the right-hand side (and hence the left-hand side) is holomorphic, which implies $t_k-N_k\preceq t_\ell'-N_\ell'$. By switching the roles we obtain $t_k-N_k\succeq t_\ell'-N_\ell'$ and hence $t_k-N_k= t_\ell'-N_\ell'$. By arguing inductively we obtain
\[
\{t_1-N_1,\dots, t_q-N_q\}=\{t_1'-N_1',\dots,t_q'-N_q'\}
\]
as multisets.

Now, we will show $N_i=N_i'$ for all $i=1,\dots, n$. By the above identity of the multisets we must have $t_1-N_1=t_i'-N_i'$ for some $i$. Since we already know $t_i=t_i'$, we have
\[
N_i'-N_1=t_i'-t_1=t_i'-t_1'\leq N_i'-N_1',
\]
where the last inequality is by the genericity condition \eqref{E:genericity_condition_complex_proof}. Hence we have $N_1'\leq N_1$. Also we must have $t_1'-N_1'=t_j-N_j$ for some $j$. By applying the same argument, we must have $N_1\leq N_1'$. Thus we must have $N_1=N_1'$. By arguing inductively we have
\[
N_i=N_i'
\]
for all $i=1,\dots, n$.
\end{proof}

Now, we are ready to prove the local converse theorem.
\begin{Thm}
Let $\pi$ and $\pi'$ be generic irreducible admissible representations of $\GL_n(\C)$. Assume that
\[
\gamma(s, \pi\times\chi, \psi_{\C})=\gamma(s, \pi'\times\chi, \psi_{\C})
\]
for all unitary characters $\chi$. Then $\pi=\pi'$.
\end{Thm}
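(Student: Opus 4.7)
The plan is to pass to the Galois side via the archimedean local Langlands correspondence and then reduce to the preceding proposition, which handles the special case when all constituents of both parameters share a single $\sim$-equivalence class. Let $\varphi$ and $\varphi'$ denote the generic Langlands parameters attached to $\pi$ and $\pi'$; the hypothesis translates to
\[
\gamma(s, \varphi \otimes \chi, \psi_\C) = \gamma(s, \varphi' \otimes \chi, \psi_\C)
\]
for every unitary character $\chi$ of $\C^\times$.

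First I would upgrade this identity from unitary characters to arbitrary characters. For each fixed $M \in \Z$, the family $\chi_{-M, s}$ parametrized by $s \in \C$ is unitary precisely on the real line $s \in M/2 + i\R$. Both sides of the gamma identity are meromorphic in $s$, so the identity theorem, applied on a line with plenty of limit points, extends the identity to all $s \in \C$. Letting $M$ vary then gives the gamma identity for every character of $\C^\times$.

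Next I would decompose the parameters into their $\sim$-equivalence classes, $\varphi = \varphi_1 \oplus \cdots \oplus \varphi_k$ and $\varphi' = \varphi'_1 \oplus \cdots \oplus \varphi'_{k'}$. The key observation is that for any character $\chi$, all poles and zeros of $\gamma(s, \varphi_i \otimes \chi, \psi_\C)$ lie in a single coset of $\Z$ in $\C$, determined by the $\sim$-class of $\varphi_i$ and by $\chi$; consequently, factors coming from distinct classes never interfere. Reading off these pole cosets with multiplicities (for instance, by taking $\chi = \chi_{-M, 0}$ with $M$ large so that poles pile up in each relevant coset) matches the classes: there is a bijection $\sigma$ such that $\varphi_i$ and $\varphi'_{\sigma(i)}$ are in the same $\sim$-class, and in particular $k = k'$.

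To finish, I would isolate the $i$-th class by rewriting the identity as
\[
\gamma(s, \varphi_i \otimes \chi, \psi_\C) = F_\chi(s) \cdot \gamma(s, \varphi'_{\sigma(i)} \otimes \chi, \psi_\C),
\]
where $F_\chi(s)$ is the ratio of the gamma factors coming from all the other classes. By the non-interference noted above, the poles and zeros of $F_\chi(s)$ lie in cosets of $\Z$ distinct from that of the $i$-th-class gamma factor. Since genericity condition (3) of the preceding proposition is hereditary, each $\varphi_i$ and each $\varphi'_{\sigma(i)}$ is itself a generic parameter in its own right, so the preceding proposition applies and yields $\varphi_i = \varphi'_{\sigma(i)}$. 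Summing over $i$ gives $\varphi = \varphi'$, and the LLC then gives $\pi = \pi'$. The main technical point to pin down carefully is the non-interference of poles across distinct $\sim$-classes (together with the matching step and the hereditary genericity); once that is set up, the rest is mostly bookkeeping that reduces to the preceding proposition.
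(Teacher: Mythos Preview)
Your proposal is correct and follows essentially the same approach as the paper: pass to the Galois side, extend the gamma identity from unitary to all characters, decompose the parameters into $\sim$-classes, use the fact that poles and zeros from distinct classes live in distinct $\Z$-cosets so they never interfere, and then apply the preceding proposition class by class. The only cosmetic difference is that you set up the bijection $\sigma$ between classes in advance, whereas the paper proceeds inductively (match one pair, apply the proposition, strip it off, repeat); your explicit mention of hereditary genericity is a detail the paper leaves implicit.
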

\begin{proof}
Let us first note that if $\chi$ is not unitary, then $\gamma(s, \pi\times\chi, \psi_{\C})=\gamma(s+t, \pi\times\chi', \psi_{\C})$ for some $t\in\C$ and some unitary character $\chi'$. Hence we may assume that the identity of the gamma factors holds for all (not necessarily unitary) characters $\chi$.

Let $\varphi$ and $\varphi'$ be the Langlands parameters of $\GL_n(\C)$ corresponding to $\pi$ and $\pi'$, respectively. Let us write
\[
\varphi=\varphi_1\oplus\cdots\oplus\varphi_k\quad\text{and}\quad\varphi'=\varphi_1'\oplus\cdots\oplus\varphi'_{k'},
\]
where all the constituents of each $\varphi_j$ are equivalent under $\sim$ and the constituents of $\varphi_i$ and $\varphi_j$ are inequivalent under $\sim$ for $i\neq j$, and similarly for $\varphi'$.

We then have
\[
\prod_{i=1}^k\gamma(s, \varphi_i\otimes\chi, \psi_{\C})=\prod_{i=1}^{k'}\gamma(s, \varphi_i'\otimes\chi, \psi_{\C}).
\]
Note that, for $i\neq j$, the gamma factors $\gamma(s, \varphi_i\otimes\chi, \psi_{\C})$ and $\gamma(s, \varphi_j\otimes\chi, \psi_{\C})$ do not share a zero or a pole, and similarly for $\varphi'$.

Assume that $\varphi$ and $\varphi'$ do not share any constituents equivalent under $\sim$. Then $\gamma(s, \varphi\otimes\chi, \psi_{\C})$ and $\gamma(s, \varphi'\otimes\chi, \psi_{\C})$ do not share a zero or pole. So there are at least some $\varphi_i$ and $\varphi_j'$ having constituents equivalent under $\sim$. By reordering the indices, we may assume $i=j=1$. Then the equality of the gamma factors is written as
\[
F_{\chi}(s)\gamma(s,\varphi_1\otimes\chi, \psi_{\C})=\gamma(s, \varphi_1'\otimes\chi, \psi_{\C}),
\]
where $F_{\chi}(s)$ is a meromorphic function whose poles and zeros do not interfere with those of the above two gamma factors. Hence by the above proposition we have $\varphi_1=\varphi_1'$. Arguing inductively, we conclude $\varphi=\varphi'$.
\end{proof}


\section{Real Case}


In this section, we consider the real case, so we set $F=\R$.

\subsection{Weil group and its representations}
Recall that the Weil group $W_{\R}$ of $\R$ is defined as
\[
W_\R=\C^\times\cup j\C^\times,\quad j^2=-1,\quad jzj^{-1}=\bar{z},
\]
where $z\in\C^\times$. We naturally view $W_\C=\C^\times$ as a subgroup of
$W_\R$. Note that $\R^\times\cong W_{\R}^{ab}$ because we have a surjective map
\begin{equation}\label{E:W_R}
W_\R\longrightarrow \R^\times, \quad z\mapsto z\bar{z},\;\; j\mapsto -1,
\end{equation}
whose kernel is the commutator group $[W_\R, W_\R]$, which is of the form
$\{z\in\C^\times:|z|=1\}$.

An irreducible representation of $W_\R$ is 1 or 2 dimensional. If it
is 1-dimensional, it factors through $W_\R^{ab}\cong\R^\times$ and
hence is identified with a character, which is of the form
\[
\la_{\ep, t}(r):=r^{-\ep}|r|^t=\sign(r)^{\ep}|r|^{t-\ep},\quad r\in\R^\times,
\]
where $\ep\in\{0,1\}$, $t\in\C$ and $\sign$ is the sign character. Also we often write $\la_{0, t}=|\cdot|^t$. If it
is 2-dimensional, it is of the form
\[
\varphi_{-N,t}:=\Ind_{W_\C}^{W_\R}\chi_{-N,t},
\]
where $\chi_{-N, t}$ is the character on $\C^\times$ as before, namely
\[
\chi_{-N, t}(z)=z^{-N}\|z\|^t
\]
for $z\in\C^\times$.

If $N=0$ then the representation $\varphi_{-N,t}$ is not irreducible but
we have
\[
\varphi_{0, t}=\la_{0, t}\oplus\la_{1, t+1}.
\]
But otherwise it is irreducible. Furthermore, since
$\Ind_{W_\C}^{W_\R}\chi_{-N,t}=\Ind_{W_\C}^{W_\R}\overline{\chi_{-N,t}}$,
we have
\begin{equation*}
\varphi_{-N,t}=\varphi_{N, t-N}.
\end{equation*}
Hence we may and do assume that $N\geq 0$. Also we consider $\la_{0, t}\oplus\la_{1, t+1}$ as the induced representation $\varphi_{0, t}$. In
general, an $n$-dimensional representation
$\varphi:W_\R\rightarrow\GL_n(\C)$ is of the form
\begin{equation}\label{E:general_parameter_R}
\varphi=\left(\la_{\ep_1,t_1}\oplus\cdots\oplus\la_{\ep_p,t_p}\right)\oplus
\left(\varphi_{-N_1,u_1}\oplus\cdots\oplus\varphi_{-N_q, u_q}\right)
\end{equation}
where we may assume that $N_i\geq 0$ for all $i$ and a representation of the form $\la_{0, t}\oplus\la_{1, t+1}$ is treated as $\varphi_{0, t}$.

Note that
\[
{\la_{\ep, t}}^\vee={\la_{\ep, t}}^{-1}=\la_{\ep, -t+2\ep}
\]
and
\[
{\varphi_{-N, t}}^\vee=\Ind_{W_{\C}}^{W_{\R}}\overline{\chi_{-N, t}}^{-1}=\varphi_{-N, N-t}.
\]

\subsection{$L$-, $\epsilon$- and $\gamma$-factors}

For the 1-dimensional $\la_{\ep, t}$, the $L$-, $\epsilon$- and $\gamma$-factors are defined as follows.
\begin{align*}
L(\la_{\ep,t})&=\pi^{-\frac{t}{2}}\Gamma\left(\frac{t}{2}\right);\\
\epsilon(\la_{\ep,t},\psi_\R)&=(-i)^\ep;\\
\gamma(\la_{\ep,t},\psi_\R)&=\epsilon(\la_{\ep,t},\psi_\R)\frac{L({\la_{\ep,t}}^\vee|\cdot|)}{L(\la_{\ep,t})}\\
&=(-i)^{\ep}\pi^{t-\ep-\frac{1}{2}}\frac{\Gamma\left(\frac{1-t+2\ep}{2}\right)}{\Gamma\left(\frac{t}{2}\right)}.
\end{align*}

For the 2-dimensional representation $\varphi_{-N,t}$ with $N\geq 0$, the local factors are defined as follows.
\begin{align*}
L(\varphi_{-N,t})&=L(\chi_{-N,t})=2(2\pi)^{-t}\Gamma(t);\\
\epsilon(\varphi_{-N,t},\psi_\R)&=-i\cdot\epsilon(\chi_{-N,t},\psi_\C)=-i^{|N|+1};\\
\gamma(\varphi_{-N,t},\psi_{\R})&=\epsilon(\varphi_{-N,t},\psi_\R)\cdot\frac{L({\varphi_{-N,t}}^\vee|\cdot|)}{L(\varphi_{-N,t})}\\
 &=-i^{|N|+1}(2\pi)^{2t-N-1}\cdot\frac{\Gamma(1-t+N)}{\Gamma(t)}.
\end{align*}
In general, if $\varphi:W_\R\rightarrow\GL_n(\C)$ is an
$n$-dimensional representation as in \eqref{E:general_parameter_R}, we
again define the local factors multiplicatively as
\begin{align*}
L(\varphi)&=L(\la_{\ep_1,t_1})\cdots L(\la_{\ep_p,t_p})\cdot
L(\varphi_{-N_1,u_1})\cdots L(\chi_{-N_q, u_q});\\
\epsilon(\varphi,\psi_\R)
&=\epsilon(\la_{\ep_1,t_1}, \psi_\R)\cdots\epsilon(\la_{\ep_p,t_p}, \psi_\R)\cdot
\epsilon(\varphi_{-N_1,u_1}, \psi_\R)\cdots \epsilon(\chi_{-N_q, u_q},\psi_\R);\\
\gamma(\varphi,\psi_\R)
&=\gamma(\la_{\ep_1,t_1}, \psi_\R)\cdots\gamma(\la_{\ep_p,t_p}, \psi_\R)\cdot
\gamma(\varphi_{-N_1,u_1}, \psi_\R)\cdots \gamma(\chi_{-N_q, u_q},\psi_\R).
\end{align*}
Let us note that for the parameter $\varphi_{0, t}$ one can check
\begin{align*}
L(\varphi_{0, t})&=L(\la_{0,t})L(\la_{1,t+1}),\\
\epsilon(\varphi_{0, t},\psi_\R)&=\epsilon(\la_{0,t},\psi_\R)\epsilon(\la_{1,t+1},\psi_\R),\\
\gamma(\varphi_{0, t},\psi_\R)&=\gamma(\la_{0,t},\psi_\R)\gamma(\la_{1,t+1},\psi_\R)
\end{align*}
by using the duplication formula $\Gamma(\frac{t}{2})\Gamma(\frac{t+1}{2})=2^{1-t}\sqrt{\pi}\Gamma(t)$.

%

\subsection{$\GL(1)$-twist}
Let $\la_{\ep,t}$ and $\la_{\delta, s}$ be characters on $\R^\times$. We set
\[
\eta=\begin{cases}2,\quad\text{if $\ep=\delta=1$};\\
0,\quad\text{otherwise},
\end{cases}
\]
so that
\[
\ep+\delta-\eta\in\{0,1\}\qand \ep+\delta-\eta=\ep+\delta\mod{2}.
\]
We then have
\begin{equation}\label{E:twisted_general_parameter_R_by_GL_1}
\la_{\ep, t}\otimes\la_{\delta,s}=
\la_{\ep+\delta-\eta,\, s+t-\eta},
\end{equation}
and hence
\begin{align}
L(\la_{\ep,t}\otimes\la_{\delta,s})&=\pi^{-\frac{s+t-\eta}{2}}\Gamma\left(\frac{s+t-\eta}{2}\right);\notag\\
\epsilon(\la_{\ep,t}\otimes\la_{\delta,s},\psi_\R)&=(-i)^{\ep+\delta-\eta};\notag\\
\label{E:twisted_gamma_factor_real}\gamma(\la_{\ep,t}\otimes\la_{\delta,s},\psi_\R)&=(-i)^{\ep+\delta-\eta}\pi^{s+t-\ep-\delta+\eta-\frac{1}{2}}
\frac{\Gamma\left(\frac{1-s-t+2(\ep+\delta)-\eta}{2}\right)}{\Gamma\left(\frac{s+t-\eta}{2}\right)}.
\end{align}

For the 2-dimensional parameter
$\varphi_{-N,t}=\Ind_{W_\C}^{W_\R}\chi_{-N, t}$, the twisted parameter
$\varphi_{-N,t}\otimes\la_{\delta,s}$ is computed as
\begin{align*}
\varphi_{-N,t}\otimes\la_{\delta,s}
&=\Ind_{W_\C}^{W_\R}(\chi_{-N, t}\otimes(\la_{\delta, s}\circ
N_{\C/\R}))\\
&=\Ind_{W_\C}^{W_\R}(\chi_{-N, t}\otimes\chi_{0, s-\delta})\\
&=\Ind_{W_\C}^{W_\R}\chi_{-N, t+s-\delta},
\end{align*}
and
\begin{equation*}
\varphi_{-N, t}\otimes\la_{\delta, s}=\varphi_{-N, t+s-\delta}.
\end{equation*}
Accordingly, we have
\begin{align*}
L(\varphi_{-N,t}\otimes\la_{\delta, s})&=2(2\pi)^{-(s+t-\delta)}\Gamma(s+t-\delta);\\
\epsilon(\varphi_{-N,t}\otimes\la_{\delta, s},\psi_\R)&=-i^{|N|+1};\\
\gamma(\chi_{-N,t}\otimes\la_{\delta, s},\psi_{\R})&=-i^{|N|+1}(2\pi)^{2(s+t-\delta)-N}\cdot\frac{\Gamma(1-s-t+\delta+N)}{\Gamma(s+t)}.
\end{align*}

If $\varphi:W_\R\rightarrow\GL_n(\C)$ is an $n$-dimensional representation as in \eqref{E:general_parameter_R}, we have
\[
\varphi\otimes\la_{\delta,  s}=
\left(\la_{\ep_1+\delta-\eta_1,s+t_1-\eta_1} \oplus\cdots\oplus\la_{\ep_p+\delta-\eta_p,s+t_p-\eta_p}\right)\oplus
\left(\varphi_{-N_1,s+u_1-\delta}\oplus\cdots\oplus\varphi_{-N_q, s+u_q-\delta}\right),
\]
where $\eta_i$ is defined as before, namely $\eta_i=2$ if $\ep_i=\delta_i=1$ and $\eta_i=0$ otherwise. Accordingly we have
\[
\gamma(\varphi\otimes\la_{\delta, s},\psi_{\R})=F(s)
\prod_{i=1}^p\frac{\Gamma\left(\frac{1-s-u_i+2(\ep_i+\delta)-\eta_i}{2}\right)}{\Gamma\left(\frac{s+u_i-\eta_i}{2}\right)}
\prod_{i=1}^q\frac{\Gamma(1-s-t_i+\delta_i+N_i)}{\Gamma(s+t_i-\delta_i)},
\]
where $F(s)$ is a holomorphic function without a zero.

We set
\begin{align*}
L(s, \varphi)&=L(\varphi\otimes\chi_{0, s});\\
\epsilon(s, \varphi,\psi_\R)&=\epsilon(\varphi\otimes\chi_{0, s}, \psi_\R);\\
\gamma(s, \varphi,\psi_\R)&=\gamma(\varphi\otimes\chi_{0, s}, \psi_\R).
\end{align*}
We then have
\[
\gamma(s, \varphi,\psi_{\R})=\epsilon(\varphi, \psi_{\R})\frac{L(1-s, \varphi^\vee)}{L(s, \varphi)}.
\]

\subsection{$\GL(2)$-twist}
For $2$-dimensional representations $\varphi_{-N,t}$ and $\varphi_{-M,s}$ of $W_\R$, we have
\begin{align*}
\varphi_{-N,t}\otimes\varphi_{-M,s}=&
\left(\Ind_{W_\C}^{W_\R}\chi_{-N,t}\right)\otimes
\left(\Ind_{W_\C}^{W_\R}\chi_{-M,s}\right)\\
=&\left(\Ind_{W_\C}^{W_\R}\chi_{-N,t}\cdot\chi_{-M,s}\right)\oplus
\left(\Ind_{W_\C}^{W_\R}\chi_{-N,t}\cdot\overline{\chi_{-M,s}}\right)\\
=&\left(\Ind_{W_\C}^{W_\R}\chi_{-(N+M),t+s}\right)\oplus
\left(\Ind_{W_\C}^{W_\R}\chi_{-N,t}\cdot\chi_{M,s-M}\right)\\
=&\varphi_{-(N+M), t+s}\oplus
\left(\Ind_{W_\C}^{W_\R}\chi_{-(N-M),t+s-M}\right)\\
=&\varphi_{-(N+M), t+s}\oplus\varphi_{-(N-M), t+s-M}.
\end{align*}

\subsection{Local Langlands correspondence for $\GL_n(\R)$}
By the archimedean local Langlands correspondence, originally established by Langlands (\cite{Langlands}), there is a one-to-one
correspondence between the set $\Irr_n$ of (infinitesimal equivalence classes) of irreducible admissible representations of $\GL_n(\R)$ and the set $\Phi_n$ of (conjugacy classes of) all continuous semisimple $n$-dimensional representations of $W_{\R}$. This correspondence is explicitly described as follows.

The 1-dimensional representation $\lambda_{\ep, t}$ corresponds to the character on $\GL_1(\R)$ in the obvious way. The 2-dimensional representation $\varphi_{-N, t}$ corresponds to the representation of $\GL_2(\R)$ of the form
\[
D_{N}\otimes|\det|^{t-\frac{N}{2}},
\]
where $D_N$ is the discrete series representation of $\GL_2(\R)$ if $N\geq 1$ and the limit of discrete series if $N=0$.

In general, let
\[
\varphi=\varphi_1\oplus\cdots\oplus\varphi_k\in\Phi_n,
\]
where each $\varphi_i$ is either $\la_{\ep_i, t_i}$ or $\varphi_{-N_i, t_i}$ with $N_i\geq 0$, with the proviso that $\lambda_{0, t}\oplus\lambda_{1, t+1}$ is considered as $\varphi_{0, t}$. For each $i$, we let $\pi_i$ be the representation of $\GL_{n_i}(\R)$ corresponding to $\varphi_i$ as above, so that $\pi_i$ is a character with $n_i=1$ or a (limit of) discrete series with $n_i=2$. Note that $n_1+\cdots+n_k=n$. Let $P(\R)$ be the $(n_1,\dots,n_k)$-parabolic of $\GL_n(\R)$, so that the Levi part is $\GL_{n_1}(\R)\times\cdots\times\GL_{n_k}(\R)$, where $n_i=1, 2$. Consider the (normalized) induced representation
\[
I(\varphi):=\Ind_{P(\R)}^{\GL_n(\R)}\pi_1\otimes\cdots\otimes\pi_k.
\]
Let us reorder the constituents of $\varphi$ in the Langlands situation, which means
\[
\Re(t_1)\geq\cdots\geq\Re(t_k).
\]
By the Langlands quotient theorem, the induced representation $I(\varphi)$ has a unique irreducible quotient (the Langlands quotient), which we denote by $\pi_\varphi$. Then the local Langlands correspondence is obtained by the map
\[
\Phi_n\longrightarrow\Irr_n,\quad \varphi\mapsto\pi_{\varphi}.
\]

\subsection{Genericity conditions}
It is well-known that the induced representation $I(\varphi)$ is generic. (See, for example, \cite[Theorem 15.4.1, p.381]{Wallach}.) The following proposition characterizes when the Langlands quotient $\pi_\varphi$ is generic.
\begin{Prop}\label{P:genericity_condition_real}
Let
\[
\varphi=\left(\la_{\ep_1,t_1}\oplus\cdots\oplus\la_{\ep_p,t_p}\right)\oplus
\left(\varphi_{-N_1,u_1}\oplus\cdots\oplus\varphi_{-N_q, u_q}\right)
\]
be a Langlands parameter, where $\la_{0, t}\oplus\la_{1, t+1}$ (if there is any) is considered as $\varphi_{0, t}$. Assume
\[
\Re(t_1)\leq\cdots\leq\Re(t_p)\quad\text{and}\quad N_1\leq\cdots\leq N_q.
\]

Then the following are all equivalent.
\begin{enumerate}[(1)]
\item The representation $\pi_\varphi$ that corresponds to $\varphi$ under the local Langlands correspondence is generic.
\item $\pi_\varphi=I(\varphi)$, namely $I(\varphi)$ is irreducible.
\item All of the following three hold:
\begin{enumerate}[(a)]
\item If $t_i-t_j\in\Z$, then $t_i-t_j\in 2\Z$.
\item If $u_i-t_j\in\Z$, then $-\ep_j\leq u_i-t_j\leq N_i-\ep_j$.
\item If $u_i-u_j\in\Z$, then $0\leq u_j-u_i\leq N_j-N_i$ for $i\leq j$. In particular, if $u_i-u_j\in\Z$ then $u_i\preceq u_j$ for $i\leq j$.
\end{enumerate}
(Note that if $t_i-t_j\notin \Z$, $u_i-t_j\notin\Z$ or $u_i-u_j\notin\Z$, then there is no condition for the corresponding case.)
\item The Rankin-Selberg $L$-factor
\[
L(s, \varphi\otimes\varphi^\vee):=L(\varphi\otimes\varphi^\vee\otimes\chi_{0, s})
\]
is holomorphic at $s=1$.
\end{enumerate}
\end{Prop}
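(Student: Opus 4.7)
My plan is to adapt the scheme of Proposition~\ref{P:genericity_condition_complex} for the complex case, following the same order of implications (1)$\Leftrightarrow$(2), (2)$\Leftrightarrow$(3), (3)$\Leftrightarrow$(4). The new complication relative to the complex case is that a Langlands parameter for $W_\R$ has three types of constituent pairs (character--character, character--two-dimensional, and two-dimensional--two-dimensional), each of which demands its own irreducibility analysis. The equivalence (1)$\Leftrightarrow$(2) will follow from the standard Kostant--Vogan Whittaker-model argument: $I(\varphi)$ is always generic (\cite[Theorem 15.4.1, p.381]{Wallach}) and carries a one-dimensional Whittaker space, so its unique irreducible generic subquotient coincides with the Langlands quotient $\pi_\varphi$ if and only if $I(\varphi)$ is irreducible.

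For (2)$\Leftrightarrow$(3), following the complex template, I would reduce by induction in stages to pairwise irreducibility and apply the Speh--Vogan criterion of \cite{Speh-Vogan} to $\GL_n(\R)$ in each of three cases:
\begin{enumerate}[(a)]
\item A pair of characters gives a $\GL_2(\R)$ principal series. Using $\la_{\ep,t}(x)=\sign(x)^{\ep}|x|^{t-\ep}$ and the classical $\GL_2(\R)$ reducibility criterion, reducibility occurs exactly when $t_i-t_j\in\Z$ and $t_i-t_j\notin 2\Z$, producing (3a).
\item A character paired with $\varphi_{-N_i, u_i}$ gives a $\GL_3(\R)$ induction from a character and a (limit of) discrete series; applying Speh--Vogan pins the reducibility locus to precisely the integer range that violates the inequality $-\ep_j\le u_i-t_j\le N_i-\ep_j$, producing (3b).
\item Two two-dimensional parameters give a $\GL_4(\R)$ induction from two (limits of) discrete series. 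Writing $\varphi_{-N, u}=\Ind_{W_{\C}}^{W_{\R}}\chi_{-N, u}$ and using Mackey together with Frobenius reciprocity, I would reduce this case to the four-character irreducibility question already treated by Proposition~\ref{P:genericity_condition_complex}, producing the inequality $0\le u_j-u_i\le N_j-N_i$ of (3c).
\end{enumerate}

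For (3)$\Leftrightarrow$(4) I would repeat the explicit gamma-function calculation of the complex case. Expanding $\varphi\otimes\varphi^\vee$ via the twisting formulas of Sections~3.3 and~3.4 writes it as a direct sum whose summands are either characters $\la_{\ep,t}$ or two-dimensional parameters $\varphi_{-N,u}$; correspondingly, $L(s,\varphi\otimes\varphi^\vee)$ becomes an explicit product of $\Gamma$-functions with an invertible holomorphic prefactor. Checking that no $\Gamma$-factor in the resulting product has a pole at $s=1$ translates blockwise into exactly the three linkage conditions (3a), (3b), (3c).

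The main obstacle I expect is case (c) of (2)$\Leftrightarrow$(3): the passage from the $\GL_4(\R)$ induction to the complex-parameter analysis must carefully match the asymmetric inequality $0\le u_j-u_i\le N_j-N_i$ on the real side with the corresponding condition on the four complex characters obtained via Mackey, and the $\Ind$--Mackey bookkeeping needed to show that $\GL_4(\R)$-reducibility is governed by the complex-side condition (rather than by some strictly weaker real-side condition) is the most delicate step of the proof.
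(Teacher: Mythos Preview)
Your overall architecture matches the paper's: (1)$\Leftrightarrow$(2) is standard, and your plan for (3)$\Leftrightarrow$(4) via the explicit $\Gamma$-function expansion of $L(s,\varphi\otimes\varphi^\vee)$ is exactly what the paper does.

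The gap is in (2)$\Leftrightarrow$(3). The paper does \emph{not} rederive this equivalence; it cites Speh's thesis (as summarized in \cite[Theorem 10b, p.164]{Moeglin}) and only indicates how to translate Moeglin's parameters into the $(\ep_i,t_i,N_j,u_j)$ notation. Your proposal instead tries to reprove it, and two steps do not go through as written. First, the assertion that irreducibility of $I(\varphi)$ reduces by induction in stages to \emph{pairwise} irreducibility of the blocks is itself the nontrivial content of Speh's result for $\GL_n(\R)$; it is not a formal consequence of transitivity of induction, so invoking it begs the question. Second, and more seriously, your case~(c) argument is not valid: Mackey theory and Frobenius reciprocity operate on the Weil-group side (they give the decomposition $\varphi_{-N_i,u_i}\otimes\varphi_{-N_j,u_j}=\varphi_{-(N_i+N_j),u_i+u_j}\oplus\varphi_{-(N_i-N_j),u_i+u_j-N_j}$ of Section~3.4), but they say nothing about reducibility of the parabolically induced representation of $\GL_4(\R)$. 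There is no mechanism by which Mackey on $W_\R\supset W_\C$ transports the $\GL_4(\R)$ irreducibility question to the $\GL_4(\C)$ principal series treated in Proposition~\ref{P:genericity_condition_complex}. The fact that condition~(3c) is formally identical to the complex condition is explained a~posteriori by the $L$-function criterion~(4), not by a direct representation-theoretic reduction.

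In short: keep your plan for (1)$\Leftrightarrow$(2) and (3)$\Leftrightarrow$(4), but for (2)$\Leftrightarrow$(3) either cite Speh/Moeglin as the paper does, or replace the Mackey sketch with a genuine $\GL_4(\R)$ reducibility computation for two discrete-series blocks (which is precisely what Speh carried out).
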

\begin{proof}
The equivalence of (1) and (2) is well-known. The equivalence of (2) and (3) is obtained by Speh in her Ph.D thesis, and the results are nicely summarized in \cite[Theorem 10b, p.164]{Moeglin}. (But one has to translate \cite{Moeglin} to our situation. For doing that, reorder the constituents of $\varphi$ in the Langlands situation, and use that her $p_i$ is our $N_i$, her $s_i$ with $n_i=1$ is our $t_i$, and her $s_i$ with $n_i=2$ is our $u_i-\frac{N_i}{2}$. The details are left to the reader.)

To show the equivalence of (3) and (4), note that, since
\[
\varphi^\vee=\left(\la_{\ep_1,\, -t_1+2\ep_1}\oplus\cdots\oplus\la_{\ep_p,\, -t_p+2\ep_p}\right)\oplus
\left(\varphi_{-N_1, N_1-u_1}\oplus\cdots\oplus\varphi_{-N_q, N_q-u_q}\right),
\]
one can compute
\begin{align*}
&\varphi\otimes\varphi^{\vee}\otimes\lambda_{0,s}\\
&=\bigoplus_{i,j}\lambda_{\ep_i+\ep_j,\, s+t_i-t_j+2\ep_j-\gamma_{ij}}\bigoplus_{i,j}\varphi_{-N_i,\,s+u_i-t_j+\ep_j} \\
 &\quad\bigoplus_{i,j}\varphi_{-N_i,\,s+N_i-u_i+t_j-\ep_j}\bigoplus_{i,j}\varphi_{-(N_i+N_j),\, s+u_i+N_j-u_j}\oplus\varphi_{-(N_i-N_j),\, s+u_i-u_j},
\end{align*}
where $\ep_i+\ep_j$ is viewed modulo 2 as before, and $\gamma_{ij}=2$ if $\ep_i=\ep_j=2$ and $0$ otherwise.
Hence
\begin{align*}
&L(\varphi\otimes\varphi^{\vee}\otimes\lambda_{0,s})\\
&=\prod_{i,j}L(\lambda_{\ep_i+\ep_j,\, s+t_i-t_j+2\ep_j-\gamma_{ij}})\prod_{i,j}L(\varphi_{-N_i,\, s+u_i-t_j+\ep_j} )\\
&\quad\prod_{i,j}L(\varphi_{-N_i,\, s+N_i-u_i+t_j-\ep_j})\prod_{i,j}L(\varphi_{-(N_i+N_j),\, s+u_i+N_j-u_j})L(\varphi_{-(N_i-N_j),\, s+u_i-u_j})\\
&=F(s)\prod_{i,j}\Gamma(\frac{s+t_i-t_j+2\ep_j-\gamma_{ij}}{2})\prod_{i,j}\Gamma(s+u_i-t_j+\ep_j)\\
&\qquad\qquad\prod_{i,j}\Gamma(s+N_i-u_i+t_j-\ep_j)\prod_{i,j}\Gamma(s+u_i+N_j-u_j)\\
&\qquad\qquad\prod_{i\geq j}\Gamma(s+u_i-u_j)\prod_{i<j}\Gamma(s+u_i-u_j-(N_i-N_j))\\
&=F(s)\prod_{i,j}\Gamma(\frac{s+t_i-t_j+2\ep_j-\gamma_{ij}}{2})\prod_{i,j}\Gamma(s+u_i-t_j+\ep_j)\\
&\qquad\qquad\prod_{i,j}\Gamma(s+N_i-u_i+t_j-\ep_j)\prod_{i,j}\Gamma(s+u_i+N_j-u_j)\\
&\qquad\qquad\prod_i\Gamma(s)\prod_{i< j}\Gamma(s+u_j-u_i)\Gamma(s+u_i-u_j-(N_i-N_j)),
\end{align*}
where $F(s)$ is a holomorphic function without a zero. We want this to be holomorphic at $s=1$.

To derive (a), assume $\Gamma(\frac{s+t_i-t_j+2\ep_j-\gamma_{ij}}{2})$ is holomorphic at $s=1$. If $t_i-t_j\notin\Z$, this is automatic. Assume $t_i-t_j\in\Z$. Then we must have either $t_i-t_j+2\ep_j-\gamma_{ij}\geq 0$ or $t_i-t_j+2\ep_j-\gamma_{ij}\in 2\Z$. The second condition is equivalent to $t_i-t_j\in2\Z$. For the first condition, by switching the roles of $i$ and $j$ we also have $t_j-t_i+2\ep_i-\gamma_{ij}\geq 0$. By combining the two, we obtain
\[
-2\ep_j+\gamma_{ij}\leq t_i-t_j\leq 2\ep_i-\gamma_{ij}.
\]
If $\ep_i=\ep_j$ then $0\leq t_i-t_j\leq 0$, which implies $t_i-t_j=0\in 2\Z$. If $\ep_i=0$ and $\ep_j=1$, then we have $-2\leq t_i-t_j\leq 0$. Hence either $t_i-t_j\in\{-2, 0\}\subseteq 2\Z$ or $t_j-t_i=1$. But the latter would give us a constituent of the form $\lambda_{0, t_i}\oplus\lambda_{1, t_i+1}$, which is considered as $\varphi_{0, t_i}$.

To derive (b), assume $\Gamma(s+u_i-t_j+\ep_j)$ is holomorphic at $s=1$. Then we must have $u_i-t_j+\ep_j\notin\Z^{<0}$. If $u_i-t_j\notin\Z$, this is automatic. If $u_i-t_j\in\Z$, then we must have $u_i-t_j+\ep_j\geq 0$, which implies $-\ep_j\leq u_i-t_j$. The other inequality of (3) follows in the same way from $\Gamma(s+N_i-u_i+t_j-\ep_j)$.

To derive (c), we argue in the same way by looking at $\Gamma(s+u_j-u_i)$ and $\Gamma(s+u_i-u_j-(N_i-N_j))$ for $i\leq j$.

As for the gamma function $\Gamma(s+u_i+N_j-u_j)$, if this is holomorphic at $s=1$ and $u_i-u_j\in\Z$, then we must have $u_i+N_j-u_j\geq 0$. By switching the roles of $i$ and $j$, we also have $u_j+N_i-u_i\geq 0$. By combining the two, we obtain $-N_j\leq u_i-u_j\leq N_i$. But this is subsumed under (3).

Hence we have proven that if $\pi$ is generic then the conditions (a), (b) and (c) are satisfied. The converse is clear by looking at $s=1$ in the above gamma functions.

\end{proof}

Let us note that the condition (3-c) in the above lemma is essentially the same as the complex case.

\subsection{Local Converse Theorem for $\GL_n(\R)$}
As we did in the complex case, we define
\[
\lambda_{\ep, t}\sim\lambda_{\ep', t'}\quad\text{if $t-t'\in\Z$},
\]
and
\[
\varphi_{-N, u}\sim\varphi_{-N', u'}\quad\text{if $u-u'\in\Z$}.
\]
Further we define
\[
\lambda_{\ep, t}\sim\varphi_{-N, u}\quad\text{if $t-u\in\Z$}.
\]
The relation $\sim$ is certainly an equivalence relation.


Let us first prove the following lemma.
\begin{Lem}
Let
\begin{align*}
\varphi&=\left(\la_{\ep_1,t_1}\oplus\cdots\oplus\la_{\ep_p,t_p}\right)\oplus
\left(\varphi_{-N_1,u_1}\oplus\cdots\oplus\varphi_{-N_q, u_q}\right)\quad\text{and}\\
\varphi'&=\left(\la_{\ep'_1,t'_1}\oplus\cdots\oplus\la_{\ep'_{p'},t'_{p'}}\right)\oplus
\left(\varphi_{-N'_1,u'_1}\oplus\cdots\oplus\varphi_{-N'_{q'} u'_{q'}}\right)
\end{align*}
be generic parameters such that all the constituents are equivalent under $\sim$. Further we assume
\[
t_1\preceq\cdots\preceq t_p\quad\text{and}\quad 0\leq N_1\leq\cdots\leq N_q,
\]
so the genericity condition (3-c) implies
\[
u_1\preceq\cdots\preceq u_p,
\]
and similarly for the $t_i'$'s, $u_i'$'s and $N_i'$'s.

Assume
\[
F(s)\gamma(s, \varphi, \psi_{\R})=\gamma(s, \varphi', \psi_{\R}),
\]
where $F(s)$ is a meromorphic function whose zeros and poles do not interfere with those of $\gamma(s, \varphi, \psi_{\R})$ and $\gamma(s, \varphi', \psi_{\R})$. Then $p=p'$ and $q=q'$, and $t_i-t_j'\in 2\Z$ for all $i, j$.
\end{Lem}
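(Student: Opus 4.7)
The plan is to detect $p$, $q$, and the common parity class of the $t_i$'s by comparing the orders of the zeros of $\gamma(s,\varphi,\psi_{\R})$ and $\gamma(s,\varphi',\psi_{\R})$ at points of the form $s=-t_0-n$, where $t_0$ is a base value in the common equivalence class and $n$ runs over large positive integers of each parity. Since $F(s)$ shares no zero or pole with either gamma factor, the zero divisors of $\gamma(s,\varphi,\psi_{\R})$ and $\gamma(s,\varphi',\psi_{\R})$ must coincide as multisets, so matching multiplicities at such specific points yields arithmetic identities that pin down $p$, $q$, and the parities.

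First I would write out explicitly
\[
\gamma(s,\varphi,\psi_{\R})=F_1(s)\prod_{i=1}^{p}\frac{\Gamma\bigl(\frac{1-s-t_i+2\ep_i}{2}\bigr)}{\Gamma\bigl(\frac{s+t_i}{2}\bigr)}\prod_{j=1}^{q}\frac{\Gamma(1-s-u_j+N_j)}{\Gamma(s+u_j)},
\]
where $F_1$ is holomorphic and nonvanishing. Thus the zeros of $\gamma(s,\varphi,\psi_{\R})$ come from the poles of the denominator gamma functions: each $\lambda_{\ep_i,t_i}$ contributes a simple zero at every point of $\{-t_i-2k:k\geq 0\}$ (spaced by $2$), while each $\varphi_{-N_j,u_j}$ contributes a simple zero at every point of $\{-u_j-k:k\geq 0\}$ (spaced by $1$). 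Assuming $p\geq 1$, I would set $t_0:=t_1$; by the genericity condition (3-a) for $\varphi$ we then have $t_i-t_0\in 2\Z$ for every $i$.

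Let $p'_e$ (resp.\ $p'_o$) denote the number of $\lambda_{\ep_j',t_j'}$ with $t_j'-t_0\in 2\Z$ (resp.\ $t_j'-t_0\in 2\Z+1$), so that $p'=p'_e+p'_o$. For $n$ a sufficiently large positive integer, every two-dimensional constituent on either side contributes $1$ to the zero order at $s=-t_0-n$, while a given one-dimensional constituent contributes $1$ or $0$ according to whether $n$ has the same parity as its shift relative to $t_0$. This produces zero order $p+q$ (for large even $n$) and $q$ (for large odd $n$) on the $\varphi$ side, and $p'_e+q'$, $p'_o+q'$ respectively on the $\varphi'$ side. Matching yields
\[
p+q=p'_e+q'\qquad\text{and}\qquad q=p'_o+q',
\]
so $p'_o=q-q'\geq 0$. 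Running the same count with $t_0:=t_1'$ gives symmetrically $p_o=q'-q\geq 0$, where $p_o$ is the analogous count on the $\varphi$ side. Adding these forces $q=q'$, hence $p_o=p'_o=0$ and then $p=p'$; the vanishing $p'_o=0$ combined with genericity (3-a) applied to $\varphi'$ gives $t_j'-t_1\in 2\Z$ for every $j$, and together with $t_i-t_1\in 2\Z$ this yields $t_i-t_j'\in 2\Z$ for all $i,j$.

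The degenerate situations in which one of $p,p'$ vanishes are handled by the same count: for instance, if $p=0$ and $p'\geq 1$, taking $t_0:=t_1'$ gives zero order $q$ on the $\varphi$ side (irrespective of parity) and $p'+q'$, $q'$ on the $\varphi'$ side, contradicting $p'\geq 1$; if $p=p'=0$ the parity claim is vacuous and $q=q'$ follows immediately from the two-dimensional count. The main technical point to watch is that these multiplicity identities must hold at a \emph{specific} $n$, not merely asymptotically, so one must choose $n$ larger than each of $|t_i-t_0|$, $|u_j-t_0|$, $|t_j'-t_0|$, $|u_j'-t_0|$ to guarantee that every constituent actively contributes.
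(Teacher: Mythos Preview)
Your proof is correct and follows essentially the same approach as the paper's: both compare the order of vanishing (equivalently, the pole order of the reciprocal) of the two gamma factors at points $s=-t_0-n$ for $n$ large and of varying parity, exploiting that one-dimensional constituents contribute only at one parity while two-dimensional constituents contribute at both. The only difference is organizational: the paper first invokes genericity~(3-a) on both sides to reduce to the dichotomy ``$t_i-t_j'\in 2\Z$ for all $i,j$'' versus ``$t_i-t_j'\in 2\Z+1$ for all $i,j$'' and then argues case by case, whereas you encode the same dichotomy in the counts $p_e',p_o'$ and let the arithmetic $p_o'=q-q'$, $p_o=q'-q$ force $p_o=p_o'=0$.
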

\begin{proof}
By computing (the reciprocals of) the gamma factors, we have
\begin{align*}
F(s)&\prod_{i=1}^p\frac{\Gamma\left(\frac{s+t_i}{2}\right)}{\Gamma\left(\frac{1-s-t_i+2\ep_i}{2}\right)}\cdot\prod_{i=1}^q\frac{\Gamma(s+u_i)}{\Gamma(1-s-u_i+N_i)}\\
=&\prod_{i=1}^{p'}\frac{\Gamma\left(\frac{s+t_i'}{2}\right)}{\Gamma\left(\frac{1-s-t_i'+2\ep_i'}{2}\right)}\cdot\prod_{i=1}^{q'}\frac{\Gamma(s+u_i')}{\Gamma(1-s-u_i'+N_i')},
\end{align*}
where $F(s)$ is a meromorphic function (possibly different from the one in the lemma) whose zeros and poles do not interfere with those of the gamma functions appearing here.

Since all the constituents are equivalent under $\sim$, we know $t_i-t_j\in\Z$, $u_i-u_j\in\Z$ and $t_i-u_j\in\Z$, and similarly for $t_i'$'s and $u_i'$'s. By the genericity condition (3-a), we know that $t_i-t_j\in 2\Z$ and $t_i'-t_j'\in 2\Z$. Hence either $t_i-t_j'\in 2\Z$ for all $i, j$ or $t_i-t_j'\in 2\Z+1$ for all $i, j$. Assume $t_i-t_j'\notin 2\Z$. Then $\prod_{i=1}^p\Gamma\left(\frac{s+t_i}{2}\right)$ and $\prod_{i=1}^p\Gamma\left(\frac{s+t_i'}{2}\right)$ do not share a pole. Hence by choosing $M\in\C$ comparable to $t_i$'s and ``large enough" with respect $\preceq$, we get a pole of order $p+q$ at $s=-M$ on the left-hand side, and a pole of order $q'$ on the right-hand side. (Note that by taking $M$ large enough, the denominators never have a pole at $s=-M$.) Hence we must have $p+q=q'$. By switching the roles of $\varphi$ and $\varphi'$ we have $p'+q'=q$. These two imply $p+p'=0$ (namely $p=p'=0$) and $q=q'$. Apparently in this case the assertion $t_i-t_j'\in 2\Z$ is vacuously true and $p+q=p'+q'$. If $p+p'\neq 0$ we must have $t_i-t_j'\in 2\Z$ for all $i, j$.

Next consider $M\in\C$ such that $-M+t_i+1\in 2\Z$ for some (and hence all) $i$. Then, at $s=-M$, none of $\Gamma\left(\frac{s+t_i}{2}\right)$ and $\Gamma\left(\frac{s+t_i'}{2}\right)$ has a pole. Further, by making $M$ ``large enough", we know that, at $s=-M$, all the denominators do not have a pole and the left-hand side has a pole of order $q$, namely the poles coming from $\Gamma(s+u_i)$. Similarly, we have a pole of order $q'$ on the right-hand side, which implies $q=q'$ and hence $p=p'$.
\end{proof}

Next we prove the following.
\begin{Prop}
Let
\begin{align*}
\varphi&=\left(\la_{\ep_1,t_1}\oplus\cdots\oplus\la_{\ep_p,t_p}\right)\oplus
\left(\varphi_{-N_1,u_1}\oplus\cdots\oplus\varphi_{-N_q, u_q}\right)\quad\text{and}\\
\varphi'&=\left(\la_{\ep'_1,t'_1}\oplus\cdots\oplus\la_{\ep'_{p'},t'_{p'}}\right)\oplus
\left(\varphi_{-N'_1,u'_1}\oplus\cdots\oplus\varphi_{-N'_{q'} u'_{q'}}\right)
\end{align*}
be generic parameters such that all the constituents are equivalent under $\sim$, where $t_i$'s, $N_i$'s, $t_i'$'s and $N_i'$'s are ordered as in the above lemma. Assume
\begin{equation}\label{E:equality_gamma_factor_R_lemma}
F_{\chi}(s)\gamma(s, \varphi\otimes\chi, \psi_{\R})=\gamma(s, \varphi'\otimes\chi, \psi_{\R})
\end{equation}
for all characters $\chi$, where $F_{\chi}(s)$ is a meromorphic function (depending on $\chi$) whose zeros and poles do not interfere with those of $\gamma(s, \varphi\otimes\chi, \psi_{\R})$ and $\gamma(s, \varphi'\otimes\chi, \psi_{\R})$. Then $\varphi=\varphi'$.
\end{Prop}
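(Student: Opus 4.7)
First, I apply the preceding lemma to obtain $p=p'$, $q=q'$, and $t_i-t_j'\in 2\Z$ for all $i,j$; by genericity condition (3-a), all of $t_1,\dots,t_p$ and $t_1',\dots,t_{p'}'$ lie in a single parity class modulo $2$. The key observation for what follows is that the $1$-dimensional denominator Gammas $\Gamma(\frac{s+t_i-\eta_i}{2})$ have their poles confined to the parity class $s\equiv -t_1\pmod 2$ (because $\eta_i\in\{0,2\}$ is even), and the corresponding numerator Gammas have poles in the opposite parity class, whereas the $2$-dimensional Gammas $\Gamma(s+u_i-\delta)$ and $\Gamma(1-s-u_i+\delta+N_i)$ have poles at every integer. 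Consequently, the zeros of $\gamma(s,\varphi\otimes\chi,\psi_{\R})$ at parity $s\equiv 1-t_1\pmod 2$ come purely from the $2$-dimensional denominator, and the poles at parity $s\equiv -t_1\pmod 2$ come purely from the $2$-dimensional numerator.

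Using this parity isolation, I first match the $2$-dimensional constituents. Twisting by $\chi=\la_{\delta,-M}$ for $\delta\in\{0,1\}$ and a large integer $M$, the denominators become holomorphic on a large region containing the relevant numerator pole positions, and the equality of divisors of the two sides, when restricted to the pure $2$-dim parities, reduces to the setting of the complex-case proposition (the two choices of $\delta$ distinguish the two possible parities of each $u_i$ relative to $t_1$). Iterating from the $\preceq$-smallest $u_i$ and invoking genericity condition (3-c), I peel off $u_i=u_i'$ one by one, and the analogous argument on the numerator side yields $N_i=N_i'$ for each $i$.

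Once the $2$-dimensional parts are identified, their contributions cancel from the gamma-factor identity, leaving a residual identity involving only the $1$-dimensional Gammas. A direct transcription of the complex-case argument, now working with Gammas of spacing $2$ and again using a $\preceq$-extremal pole analysis, gives $t_i=t_i'$ for all $i$. Finally, to determine each $\ep_i$, I compare the identities at $\chi=\la_{0,0}$ and at $\chi=\la_{1,0}$: the sign-character twist sends $\la_{\ep_i,t_i}$ to $\la_{\ep_i+1-\eta_i,\,t_i-\eta_i}$, so the pole location $s=-t_i+\eta_i$ of the associated Gamma shifts by $2$ precisely when $\ep_i=1$, thereby detecting $\ep_i$. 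The main obstacle is the simultaneous bookkeeping of $2$-dim numerator and denominator contributions across both parities, which is resolved by carrying out the parity isolation (with both $\delta=0$ and $\delta=1$ twists) one pole-family at a time.
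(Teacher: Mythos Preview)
Your strategy—isolating the two-dimensional constituents first via parity, then handling the one-dimensional part—runs in the opposite order from the paper, which matches the $t_i,\ep_i$ first and only then the $u_i,N_i$. The parity observation is correct: at $s\equiv 1-t_1\pmod 2$ the one-dimensional factor $\Gamma(\tfrac{s+t_i-\eta_i}{2})$ is holomorphic, so any zero of $\gamma$ there must originate in the two-dimensional denominator $\prod_i\Gamma(s+u_i-\delta)$, and toggling $\delta\in\{0,1\}$ indeed accesses both residue classes of the $u_i$ relative to $t_1$.

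There is, however, a genuine gap at the ``large $M$'' step. In the complex proposition the twist by $\chi_{-M,0}$ replaces each $N_i$ by $N_i+M$, which pushes the denominator Gammas $\Gamma(1-s-t_i+N_i+M)$ far to the right while leaving the numerator Gammas $\Gamma(s+t_i)$ fixed; that is why choosing $M$ large separates them there. No real one-dimensional twist does this: $\la_{\delta,-M}$ sends $\varphi_{-N_i,u_i}\mapsto\varphi_{-N_i,\,u_i-M-\delta}$ and $\la_{\ep_i,t_i}\mapsto\la_{\ep_i+\delta-\eta_i,\,t_i-M-\eta_i}$, so \emph{every} pole family (numerator and denominator, one- and two-dimensional) is translated by the same amount. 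The twist is equivalent to substituting $s\to s+M$ and buys no separation.

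What your argument actually needs at $s=-u_1$ (or $s=-u_1+1$, depending on $\delta$) is that the \emph{numerator} of $\gamma$ be holomorphic there—in particular the one-dimensional piece $\Gamma(\tfrac{1-s-t_i+2\ep_i}{2})$, which \emph{does} live at parity $1-t_1$ and so can cancel the two-dimensional zero you are trying to detect. Holomorphy there amounts to $u_1\geq t_i-2\ep_i$ for all $i$, which follows from the mixed genericity condition (3-b), together with $u_1\geq u_i-N_i$ from (3-c). You invoke (3-c) but not (3-b). With (3-b) inserted at this point (and symmetrically when reading off $\{u_i-N_i\}$ from the poles of $\gamma$ at parity $-t_1$), your argument goes through. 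The paper's ordering avoids this particular appeal to (3-b) in the two-dimensional step because the one-dimensional factors have already been cancelled; it uses (3-b) instead at the earlier stage, to rule out interference from $\Gamma(1-s-u_i+N_i)$ and $\Gamma(s+u_i')$ when locating the pole at $s=-t_1$.
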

\begin{proof}
From the above lemma, we already know that $p=p'$, $q=q'$ and $t_i-t_j'\in 2\Z$.

By choosing $\chi$ to be trivial, (the reciprocal of) the identity \eqref{E:equality_gamma_factor_R_lemma} is written as
\begin{gather}\label{E:gamma_real_twist_by_lambda_0_s}
\begin{aligned}
&F(s)\prod_{i=1}^p\frac{\Gamma\left(\frac{s+t_i}{2}\right)}{\Gamma\left(\frac{1-s-t_i+2\ep_i}{2}\right)}\cdot\prod_{i=1}^q\frac{\Gamma(s+u_i)}{\Gamma(1-s-u_i+N_i)}\\
&\quad=\prod_{i=1}^{p}\frac{\Gamma\left(\frac{s+t_i'}{2}\right)}{\Gamma\left(\frac{1-s-t_i'+2\ep_i'}{2}\right)}\cdot\prod_{i=1}^{q}\frac{\Gamma(s+u_i')}{\Gamma(1-s-u_i'+N_i')},
\end{aligned}
\end{gather}
where $F(s)$ is a meromorphic function whose zeros and poles do not interfere with those of the gamma functions appearing here.

We will show $t_i=t_i'$ for $i=1,\dots, q$ by looking at poles of $\Gamma(\frac{s+t_i}{2})$ and $\Gamma(\frac{s+t'_i}{2})$. By the genericity condition (3-a), we know that $t_i-t_j\in2\Z$ for all $i, j$. Recall $t_i$'s and $t_i'$'s are in the increasing order, so that $t_1\preceq t_i$ and $t_1'\preceq t_i'$ for all $i$. Assume $t_1\prec t_1'$ (strict inequality). Let us consider the poles at $s=-t_1$. Since $\Gamma(\frac{s+t_1}{2})$ has a pole at $s=-t_1$, the numerator of the left-hand side has a pole. Certainly, the denominator $\Gamma(\frac{1-s-t_i+2\ep_i}{2})$ does not have a pole at $s=-t_1$ because $1+t_1-t_i+2\ep_i$ is odd. Also $\Gamma(1-s-u_i+N_i)$ does not have a pole because by the genericity condition (3-b) we have
\[
1-(-t_1)-u_i+N_i\geq 1+\ep_1\geq 1.
\]
Hence the left-hand side of \eqref{E:gamma_real_twist_by_lambda_0_s} has a pole. Now since we already know $t_1-t_1'\in 2\Z$, our assumption $t_1\prec t_1'$ actually implies $t_1\prec t_1'-1$. Apparently, on the right-hand side, $\Gamma\left(\frac{s+t_i'}{2}\right)$ cannot have a pole at $s=-t_1$ for all $i$. Hence some $\Gamma(s+u_i')$ must have a pole at $s=-t_1$. But the genericity condition implies
\[
-t_1+u_i'>-t_1'+1+u_i'\geq 1-\ep_1' \geq 0.
\]
Hence $\Gamma(s+u_i')$ cannot have a pole at $s=-t_1$, which is a contradiction. Thus we must have $t_1\succeq t_1'$. By switching the roles of $t_1$ and $t_1'$, we have $t_1\preceq t_1'$. Hence $t_1=t_1'$. Then we can cancel the gamma functions containing $t_1$ and $t_1'$. By repeating the same argument, we obtain
\[
t_i=t_i'
\]
for all $i=1,\dots, p$.

Next we show that all the $\ep$'s agree. For this purpose, we consider the twist by $\chi=\la_{1, s}$. By using \eqref{E:twisted_gamma_factor_real}, (the reciprocal of) the equality \eqref{E:equality_gamma_factor_R_lemma} is written as
\begin{align*}
&F(s)\prod_{i=1}^p\frac{\Gamma\left(\frac{s+t_i-2\ep_i}{2}\right)}{\Gamma\left(\frac{3-s-t_i}{2}\right)}
\cdot\prod_{i=1}^q\frac{\Gamma(s+u_i-1)}{\Gamma(1-s-u_i+1+N_i)}\\
&\quad=\prod_{i=1}^{p}\frac{\Gamma\left(\frac{s+t_i'-2\ep_i'}{2}\right)}{\Gamma\left(\frac{3-s-t_i'}{2}\right)}
\cdot\prod_{i=1}^{q}\frac{\Gamma(s+u_i'-1)}{\Gamma(1-s-u_i'+1+N_i')},
\end{align*}
where $F(s)$ is a meromorphic function whose zeros and poles do not interfere with those of the gamma functions appearing here. Now let $k\in\{1,\dots,p\}$ be such that
\[
t_k-2\ep_k\preceq t_i-2\ep_i
\]
for all $i$, namely $t_k-2\ep_k$ is minimal with respect to $\preceq$. Similarly let $\ell$ be such that
\[
t_\ell'-2\ep_\ell'\preceq t_i'-2\ep_i'
\]
for all $i$. One can then apply the same argument as above with $s=-(t_k-2\ep_k)$ and conclude that
\[
t_k-2\ep_k=t_\ell'-2\ep_\ell'.
\]
By arguing inductively, we have
\[
\{t_1-2\ep_1,\cdots, t_p-2\ep_p\}=\{t_1-2\ep'_1,\cdots, t_p-2\ep'_p\}
\]
as multisets.

From this identity of multisets, we will derive the identity
\[
\{(\ep_1,t_1),\cdots,(\ep_p, t_p)\}=\{(\ep'_1,t_1),\cdots,(\ep'_p, t_p)\}
\]
of multisets. For this, it suffices to show $(\ep_i, t_i)=(\ep'_j, t_j)$ for some $i$ and $j$, because then we can argue inductively on the size of the multisets. Now, we know $t_1-2\ep_1=t_i-2\ep_i'$ for some $i$. But then we must have $t_i-t_1=2(\ep_i'-\ep_1)\geq 0$ because of our ordering of $t_i$'s. So we must have $\ep_i'\geq \ep_1$. Suppose $\ep_1=1$. Then we have $\ep_i'=1$. Hence the equality $t_1-2\ep_1=t_i-2\ep_i'$ implies $t_1=t_i$, and so $(\ep_1, t_1)=(\ep_i', t_i)$. Next suppose $\ep_1=0$. If $\ep_1'=0$ then we have $(\ep_1, t_1)=(\ep_1', t_1)$. If $\ep_1'=1$ then by switching the roles of $\ep_1$ and $\ep_1'$ we have $(\ep_1', t_1)=(\ep_j, t_j)$ for some $j$. Thus in any case we know that $(\ep_i, t_i)=(\ep'_j, t_j)$ for some $i$ and $j$.

Now, we can cancel from \eqref{E:gamma_real_twist_by_lambda_0_s} all the factors containing $t_i, \ep_i, t_i'$ and $\ep_i'$ and obtain
\begin{equation}\label{E:real_2-dimensional_part}
F(s)\prod_{i=1}^q\frac{\Gamma(s+u_i)}{\Gamma(1-s-u_i+N_i)}
=\prod_{i=1}^{q}\frac{\Gamma(s+u_i')}{\Gamma(1-s-u_i'+N_i')}.
\end{equation}

Recall that $u_i$'s and $u_i'$'s are in the increasing order with respect to $\preceq$. We will show $u_i=u_i'$ by induction on $i$. Assume $u_1\prec u_1'$ (strict inequality). Then $\Gamma(s+u_1)$ has a pole at $s=-u_1$ on the left-hand side, and the denominator $\Gamma(1-s-u_i+N_i)$ does not have a pole at $s=-u_1$ because by the genericity condition (3-c) we have $1+u_1-u_i+N_i\geq 1+N_1$ for all $i$. But since $u_1\prec u_1'$, the right-hand side cannot have a pole at $s=-u_1$. Hence we must have $u_1\succeq u_1'$. By switching the roles of $u_1$ and $u_1'$, we have $u_1\preceq u_1'$, from which we have $u_1=u_1'$. Now assume we have shown $u_i=u_i'$ for $i=1,\dots, j$ for some $j$. Then the above identity \eqref{E:real_2-dimensional_part} is reduced to
\begin{gather}\label{E:induction_step_real_2-dimensional_part}
\begin{aligned}
F(s)&\prod_{i=1}^j\frac{1}{\Gamma(1-s-u_i+N_i)}\prod_{i=j+1}^q\frac{\Gamma(s+u_i)}{\Gamma(1-s-u_i+N_i)}\\
=&\prod_{i=1}^j\frac{1}{\Gamma(1-s-u_i'+N_i')}\prod_{i=j+1}^q\frac{\Gamma(s+u_i')}{\Gamma(1-s-u_i'+N_i')}.
\end{aligned}
\end{gather}
Assume $u_{j+1}\prec u_{j+1}'$. Then by the same reasoning as above, the product
\[
\prod_{i=j+1}^q\frac{\Gamma(s+u_i)}{\Gamma(1-s-u_i+N_i)}
\]
has a pole at $s=-u_{j+1}$. Also $\Gamma(1-s-u_i+N_i)$ does not have a pole at $s=-u_{j+1}$ for all $i=1,\dots, j$, because $1+u_{j+1}-u_i+N_i\geq 1+N_i\geq 1$ by genericity condition (3-c), since $j+1 \geq i$. Hence the left-hand side of \eqref{E:induction_step_real_2-dimensional_part} has a pole at $s=-u_{j+1}$. But the right-hand side does not have a pole at $s=u_{j+1}$ because $u_{j+1}\prec u_{j+1}'$. Thus we must have $u_{j+1}\succeq u_{j+1}'$. By switching the roles of $u_{j+1}$ and $u_{j+1}'$, we get $u_{j+1}\preceq u_{j+1}'$, from which we have $u_{j+1}=u_{j+1}'$. Hence we have
\[
u_i=u_i'
\]
for all $i=1,\dots, q$.

By cancelling the numerators from \eqref{E:real_2-dimensional_part}, we obtain
\[
\prod_{i=1}^q\Gamma(1-s-u_i+N_i)=F(s)\prod_{i=1}^{q}\Gamma(1-s-u_i'+N_i').
\]
Let $k, \ell\in\{1,\dots,q\}$ be such that
\[
u_k-N_k\succeq u_i-N_i\quad\text{and}\quad u_\ell'-N_\ell'\succeq u_i'-N_i'
\]
for all $i$. Then at $s=-u_k+N_k$ the left-hand side (and hence the right-hand side) is holomorphic, which implies $u_k-N_k\preceq u_\ell'-N_\ell'$. By switching the roles we obtain $u_k-N_k\succeq u_\ell'-N_\ell'$ and hence $u_k-N_k= u_\ell'-N_\ell'$. By arguing inductively we obtain
\[
\{u_1-N_1,\dots, u_q-N_q\}=\{u_1'-N_1',\dots,u_q'-N_q'\}
\]
as multisets.

Then we can show $N_i=N_i'$ for all $i=1,\dots, n$ by exactly the same argument as the complex case as follows. By the above identity of the multisets we must have $u_1-N_1=u_i'-N_i'$ for some $i$. Since we already know $u_i=u_i'$, we have
\[
N_i'-N_1=u_i'-u_1=u_i'-u_1'\leq N_i'-N_1',
\]
where the last inequality is by the genericity condition (3). Hence we have $N_1'\leq N_1$. Also we must have $u_1'-N_1'=u_j-N_j$ for some $j$. By applying the same argument, we must have $N_1\leq N_1'$. Thus we must have $N_1=N_1'$. By arguing inductively we have
\[
N_i=N_i'
\]
for all $i=1,\dots, n$.


\end{proof}

Now, we are ready to prove the local converse theorem.
\begin{Thm}
Let $\pi$ and $\pi'$ be generic irreducible admissible representations of $\GL_n(\R)$. Assume that
\[
\gamma(s, \pi\times\chi, \psi_{\R})=\gamma(s, \pi'\times\chi, \psi_{\R})
\]
for all unitary characters $\chi$. Then $\pi=\pi'$.
\end{Thm}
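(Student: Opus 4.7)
The plan is to mimic the strategy that was already successful in the complex case, with the real analogue of the preceding proposition doing the main work.

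First, I would reduce to the case where the identity of gamma factors is assumed for all characters $\chi$ of $\R^\times$, not merely the unitary ones. This is the standard observation that twisting by a non-unitary character $\chi$ differs from twisting by its unitarization only by a shift of the complex variable $s$, so that the identity for unitary characters extends to the identity for arbitrary characters. Next, I would invoke the archimedean local Langlands correspondence to replace $\pi$ and $\pi'$ by their Langlands parameters $\varphi, \varphi' : W_\R \to \GL_n(\C)$, both of which are generic in the sense of Proposition \ref{P:genericity_condition_real}.

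The key structural step is to group constituents of $\varphi$ and $\varphi'$ according to the equivalence relation $\sim$ introduced before the lemma. Writing
\[
\varphi = \varphi_1 \oplus \cdots \oplus \varphi_k \quad\text{and}\quad \varphi' = \varphi'_1 \oplus \cdots \oplus \varphi'_{k'},
\]
where within each $\varphi_i$ all constituents are $\sim$-equivalent and distinct $\varphi_i, \varphi_j$ contain no $\sim$-equivalent constituents, I would observe (exactly as in the complex case) that the poles and zeros of $\gamma(s, \varphi_i \otimes \chi, \psi_\R)$ and $\gamma(s, \varphi_j \otimes \chi, \psi_\R)$ do not interfere for $i \neq j$, and similarly for the $\varphi'_i$. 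If $\varphi$ and $\varphi'$ had no $\sim$-equivalent constituents in common, their gamma factors would share no poles or zeros at all, contradicting the hypothesis (for instance by picking $\chi$ trivial and considering any pole of the left-hand side). Thus after relabeling we may assume $\varphi_1$ and $\varphi'_1$ share $\sim$-equivalent constituents, and then
\[
F_\chi(s)\, \gamma(s, \varphi_1 \otimes \chi, \psi_\R) = \gamma(s, \varphi'_1 \otimes \chi, \psi_\R)
\]
where $F_\chi(s)$ is a meromorphic function with poles and zeros disjoint from those of the two gamma factors appearing. The preceding proposition then forces $\varphi_1 = \varphi'_1$.

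Iterating this argument, removing matched summands one at a time, yields $\varphi = \varphi'$, hence $\pi = \pi'$ via the local Langlands correspondence. The only potentially delicate point is the very first reduction step, where one needs to be sure that the $\sim$-equivalence classes can be separated on the level of gamma factors; but once the non-interference of poles across different $\sim$-classes is established, the argument is entirely formal and parallels the complex case verbatim, with the real analogue of the key proposition playing the role that the complex proposition played before.
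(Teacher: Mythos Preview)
Your proposal is correct and follows essentially the same approach as the paper's own proof: reduce to arbitrary characters, pass to Langlands parameters, decompose into $\sim$-equivalence classes, use non-interference of poles across classes to match blocks one at a time, and invoke the preceding proposition on each matched pair. The structure and all key steps coincide with the paper.
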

\begin{proof}
As in the complex case, we may assume that the identity of the gamma factors holds for all (not necessarily unitary) characters $\chi$.

Let $\varphi$ and $\varphi'$ be the Langlands parameters of $\GL_n(\R)$ corresponding to $\pi$ and $\pi'$, respectively. Let us write
\[
\varphi=\varphi_1\oplus\cdots\oplus\varphi_k\quad\text{and}\quad\varphi'=\varphi_1'\oplus\cdots\oplus\varphi'_{k'},
\]
where all the constituents of each $\varphi_j$ are equivalent under $\sim$ and the constituents of $\varphi_i$ and $\varphi_j$ are inequivalent under $\sim$  for $i\neq j$, and similarly for $\varphi'$. We then have
\[
\prod_{i=1}^k\gamma(s, \varphi_i\otimes\chi, \psi_{\R})=\prod_{i=1}^{k'}\gamma(s, \varphi_i'\otimes\chi, \psi_{\R}).
\]
Note that, for $i\neq j$, the gamma factors $\gamma(s, \varphi_i\otimes\chi, \psi_{\R})$ and $\gamma(s, \varphi_j\otimes\chi, \psi_{\R})$ do not share a zero or a pole, and similarly for $\varphi'$.

Assume that $\varphi$ and $\varphi'$ do not share any constituents equivalent under $\sim$. Then $\gamma(s, \varphi\otimes\chi, \psi_{\R})$ and $\gamma(s, \varphi'\otimes\chi, \psi_{\R})$ do not share a zero or pole. So there are at least some $\varphi_i$ and $\varphi_j'$ having constituents equivalent under $\sim$. By reordering indices, we may assume $i=j=1$. Then the equality of the gamma factors is written as
\[
F_{\chi}(s)\gamma(s,\varphi_1\otimes\chi, \psi_{\R})=\gamma(s, \varphi_1'\otimes\chi, \psi_{\R}),
\]
where $F_{\chi}(s)$ is a meromorphic function whose poles and zeros do not interfere with those of the above two gamma factors. Hence by the above proposition we have $\varphi_1=\varphi_1'$. Arguing inductively, we conclude $\varphi=\varphi'$.
\end{proof}

\bibliographystyle{amsalpha}
\bibliography{arch_local_converse}

\providecommand{\bysame}{\leavevmode\hbox to3em{\hrulefill}\thinspace}
\providecommand{\MR}{\relax\ifhmode\unskip\space\fi MR }
\providecommand{\MRhref}[2]{%
  \href{http://www.ams.org/mathscinet-getitem?mr=#1}{#2}
}
\providecommand{\href}[2]{#2}
\begin{thebibliography}{ALSX16}

\bibitem[ALSX16]{ALSX}
Moshe Adrian, Baiying Liu, Shaun Stevens, and Peng Xu, \emph{On the {J}acquet
  conjecture on the local converse problem for {$p$}-adic {$\rm{GL}_N$}},
  Represent. Theory \textbf{20} (2016), 1--13. \MR{3452696}

\bibitem[AT]{Adrian-Takeda}
Moshe Adrian and Shuichiro Takeda, \emph{On uniqueness of the local langlands
  correspondence of archimedean $\operatorname{GL}(n)$}, preprint.

\bibitem[Cha19]{Chai}
Jingsong Chai, \emph{Bessel functions and local converse conjecture of
  {J}acquet}, J. Eur. Math. Soc. (JEMS) \textbf{21} (2019), no.~6, 1703--1728.
  \MR{3945739}

\bibitem[Che06]{Chen}
Jiang-Ping~Jeff Chen, \emph{The {$n\times(n-2)$} local converse theorem for
  {${\rm GL}(n)$} over a {$p$}-adic field}, J. Number Theory \textbf{120}
  (2006), no.~2, 193--205. \MR{2257542}

\bibitem[Hen93]{Henniart1}
Guy Henniart, \emph{Caract\'{e}risation de la correspondance de {L}anglands
  locale par les facteurs {$\epsilon$} de paires}, Invent. Math. \textbf{113}
  (1993), no.~2, 339--350. \MR{1228128}

\bibitem[Hen02]{Henniart2}
\bysame, \emph{Une caract\'{e}risation de la correspondance de {L}anglands
  locale pour {${\rm GL}(n)$}}, Bull. Soc. Math. France \textbf{130} (2002),
  no.~4, 587--602. \MR{1947454}

\bibitem[Jac09]{Jacquet}
Herv\'{e} Jacquet, \emph{Archimedean {R}ankin-{S}elberg integrals}, Automorphic
  forms and {$L$}-functions {II}. {L}ocal aspects, Contemp. Math., vol. 489,
  Amer. Math. Soc., Providence, RI, 2009, pp.~57--172. \MR{2533003}

\bibitem[JL70]{Jacquet-Langlands}
H.~Jacquet and R.~P. Langlands, \emph{Automorphic forms on {${\rm GL}(2)$}},
  Lecture Notes in Mathematics, Vol. 114, Springer-Verlag, Berlin-New York,
  1970. \MR{0401654}

\bibitem[JL18]{JL}
Herv\'{e} Jacquet and Baiying Liu, \emph{On the local converse theorem for
  {$p$}-adic {${\rm GL}_n$}}, Amer. J. Math. \textbf{140} (2018), no.~5,
  1399--1422. \MR{3862069}

\bibitem[JNS15]{JNS}
Dihua Jiang, Chufeng Nien, and Shaun Stevens, \emph{Towards the {J}acquet
  conjecture on the local converse problem for {$p$}-adic {${\rm GL}_n$}}, J.
  Eur. Math. Soc. (JEMS) \textbf{17} (2015), no.~4, 991--1007. \MR{3349305}

\bibitem[Lan89]{Langlands}
R.~P. Langlands, \emph{On the classification of irreducible representations of
  real algebraic groups}, Representation theory and harmonic analysis on
  semisimple {L}ie groups, Math. Surveys Monogr., vol.~31, Amer. Math. Soc.,
  Providence, RI, 1989, pp.~101--170. \MR{1011897}

\bibitem[Moe97]{Moeglin}
C.~Moeglin, \emph{Representations of {${\rm GL}(n)$} over the real field},
  Representation theory and automorphic forms ({E}dinburgh, 1996), Proc.
  Sympos. Pure Math., vol.~61, Amer. Math. Soc., Providence, RI, 1997,
  pp.~157--166. \MR{1476497}

\bibitem[Nie14]{Nien}
Chufeng Nien, \emph{A proof of the finite field analogue of {J}acquet's
  conjecture}, Amer. J. Math. \textbf{136} (2014), no.~3, 653--674.
  \MR{3214273}

\bibitem[NZ21]{NienZhangYun}
Chufeng Nien and Lei Zhang, \emph{Converse theorem of gauss sums. with an
  appendix by zhiwei yun.}, J. Number Theory \textbf{221} (2021), 365--388.

\bibitem[Sha85]{Shahidi}
Freydoon Shahidi, \emph{Local coefficients as {A}rtin factors for real groups},
  Duke Math. J. \textbf{52} (1985), no.~4, 973--1007. \MR{816396}

\bibitem[SV80]{Speh-Vogan}
Birgit Speh and David~A. Vogan, Jr., \emph{Reducibility of generalized
  principal series representations}, Acta Math. \textbf{145} (1980), no.~3-4,
  227--299. \MR{590291}

\bibitem[Wal92]{Wallach}
Nolan~R. Wallach, \emph{Real reductive groups. {II}}, Pure and Applied
  Mathematics, vol. 132, Academic Press, Inc., Boston, MA, 1992. \MR{1170566}

\end{thebibliography}

\end{document}